\documentclass[10pt,leqno,letterpaper]{article}
\usepackage[utf8]{inputenc}
\usepackage{amsmath}
\usepackage{amsfonts}
\usepackage{amssymb}
\usepackage{graphicx}
\usepackage{mathrsfs}
\usepackage{upref,amsthm,amsxtra,exscale}
\usepackage{cite}
\usepackage[colorlinks=true,urlcolor=blue,
citecolor=red,linkcolor=blue,linktocpage,pdfpagelabels,
bookmarksnumbered,bookmarksopen]{hyperref}

\newtheorem{theorem}{Theorem}[section]

\newtheorem{remark}[theorem]{Remark}
\newtheorem{lemma}[theorem]{Lemma}

\newtheorem{definition}[theorem]{Definition}

\numberwithin{equation}{section}

\def\r{\mathbb{R}}
\def\rn{\mathbb{R}^N}
\def\z{\mathbb{Z}}

\def\n{\mathbb{N}}

\def\eps{\varepsilon}

\def\io{\int_{\Omega}}

\def\vp{\varphi}
\def\o{\Omega}
\def\bf{\mathbf}
\def\tilde{\widetilde}
\def\cC{\mathcal{C}}

\def\cH{\mathcal{H}}
\def\cI{\mathcal{I}}
\def\cJ{\mathcal{J}}
\def\cK{\mathcal{K}}
\def\cM{\mathcal{M}}
\def\cN{\mathcal{N}}

\def\cS{\mathcal{S}}
\def\cT{\mathcal{T}}
\def\cU{\mathcal{U}}

\author{Mónica Clapp\footnote{M. Clapp was partially supported by CONACYT grant A1-S-10457 (Mexico)}\qquad and \qquad Angela Pistoia\footnote{A. Pistoia was partially supported by Fondi di Ateneo ``Sapienza'' Università di Roma (Italy).}}
\title{Fully nontrivial solutions to elliptic systems with mixed couplings}
\date{\today}

\begin{document}
\maketitle

\begin{abstract}
We study the existence of fully nontrivial solutions to the system
$$-\Delta u_i+ \lambda_iu_i = \sum\limits_{j=1}^\ell \beta_{ij}|u_j|^p|u_i|^{p-2}u_i\ \hbox{in}\ \Omega, \qquad
 i=1,\ldots,\ell,$$
in a bounded or unbounded domain $\Omega$ in $\mathbb R^N,$ $N\ge 3$. The $\lambda_i$'s are real numbers, and the nonlinear term may have subcritical ($1<p<\frac{N}{N-2}$), critical ($p=\frac{N}{N-2}$), or supercritical growth ($p>\frac{N}{N-2}$). The matrix $(\beta_{ij})$ is symmetric and admits a block decomposition such that the diagonal entries $\beta_{ii}$ are positive, the interaction forces within each block are attractive (i.e., all entries $\beta_{ij}$ in each block are non-negative) and the interaction forces between different blocks are repulsive (i.e., all other entries are non-positive). We obtain new existence and multiplicity results of fully nontrivial solutions, i.e., solutions where every component $u_i$ is nontrivial. We also find fully synchronized solutions (i.e., $u_i=c_i u_1$ for all $i=2,\ldots,\ell$)  in the purely cooperative case whenever $p\in(1,2).$ 

\textsc{Keywords:} Weakly coupled systems; mixed cooperation and competition; positive and sign-changing solutions; Nehari manifold.

\textsc{MSC2020:} 35J47, 35A15.
\end{abstract}

\section{Introduction}

We consider the system of nonlinear elliptic equations
\begin{equation} \label{eq:system}
\begin{cases}
-\Delta u_i+ \lambda_iu_i = \sum\limits_{j=1}^\ell \beta_{ij}|u_j|^p|u_i|^{p-2}u_i, \\
u_i\in H,\qquad i=1,\ldots,\ell,
\end{cases}
\end{equation}
where $H$ is either $H_0^1(\o)$ or $D^{1,2}_0(\o)$, $\o$ is an open subset of $\rn$, $N\geq 3$, $\lambda_i\in\r$ and $p>1$. We assume that
\begin{itemize}
\item[$(A_1)$] the operators $-\Delta+\lambda_i$ are well defined and coercive in $H$ for all $i=1,\ldots,\ell$,
\item[$(B_1)$] the matrix $(\beta_{ij})$ is symmetric and admits a block decomposition as follows: For some $1 < q < \ell$ there exist \ $0=\ell_0<\ell_1<\dots<\ell_{q-1}<\ell_q=\ell$ \ such that, if we set
\begin{align*}
 &I_h:= \{i \in  \{1,\dots,\ell\}:  \ell_{h-1} < i \le \ell_h \},\\
 &\cI_h:=I_h\times I_h,\qquad \cK_h:=\big\{(i,j)\in I_h\times I_k: k\in\{1,\ldots,q\}\smallsetminus\{h\}\big\},
\end{align*}
then \ $\beta_{ii}>0$,
\begin{equation*}
\beta_{ij}\ge0\ \text{ if }\ (i,j)\in \cI_h \quad \text{ and }\quad \beta_{ij}\le0\ \text{if}\ (i,j)\in \cK_h,\quad h=1,\ldots,q.
\end{equation*}
\end{itemize}

This type of systems models some physical phenomena in nonlinear optics and describes the behavior of multi-component Bose-Einstein condensates. The coefficient $\beta_{ij}$ represents the interaction force between the components $u_i$ and $u_j$. The sign of $\beta_{ij}$ determines whether the interaction is attractive or repulsive. If $\beta_{ij}\geq 0$ for all $i\neq j$ (i.e., if $q=1$) the system \eqref{eq:system} is called \emph{purely cooperative}, and it is called \emph{purely competitive} if $\beta_{ij}\leq 0$ for all $i\neq j$ (i.e., if $q=\ell$).

In the past fifteen years, systems that are either purely cooperative or purely competitive have been extensively studied, particularly those with cubic nonlinearity (i.e., with $p=2$). It is convenient to consider other powers, specially when dealing with critical systems; see, e.g., \cite{cz1,cz2,cp,cs}. We refer to the introduction of the papers \cite{bks,dp} for an overview on the topic and an ample list of references.

Systems with mixed couplings were considered in the seminal paper \cite{lw} by Lin and Wei and more recently in \cite{bsw,so,st,sw1,sw2,ty,ww}. All of these works treat only the cubic nonlinearity $p=2$. In the present paper, we are mainly concerned with the case $p<2$.

According to the decomposition given by $(B_1)$, we shall write a solution $\bf u=(u_1,\ldots,u_\ell)$ to \eqref{eq:system} in block-form as
$$\bf u=(\bar u_1,\ldots,\bar u_q)\qquad\text{with \ }\bar u_h=(u_{\ell_{h-1}+1},\ldots,u_{\ell_h}).$$
$\bf u$ is called \emph{semitrivial} if some but not all of its components $u_i$ are zero and it is said to be \emph{fully nontrivial} if every component $u_i$ is different from zero. We shall call it \emph{block-wise nontrivial} if at least one component of each block $\bar u_h$ is nontrivial.

We prove the following result.

\begin{theorem} \label{thm:main_block}
Assume $(A_1)$ and $(B_1)$. Assume further that
\begin{itemize}
\item[$(A_2)$] the embedding $H\hookrightarrow L^{2p}(\o)$ is compact.
\end{itemize}
Then, the system \eqref{eq:system} has a least energy block-wise nontrivial solution.
\end{theorem}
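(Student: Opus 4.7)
The plan is to minimize the natural energy
\begin{equation*}
J(\bf u):=\tfrac12\sum_{i=1}^\ell\io(|\nabla u_i|^2+\lambda_i u_i^2)-\tfrac{1}{2p}\sum_{i,j=1}^\ell\beta_{ij}\io|u_i|^p|u_j|^p
\end{equation*}
over a block-wise Nehari set. Under $(A_1)$ each $\io(|\nabla u|^2+\lambda_i u^2)$ is an equivalent squared norm on $H$; under $(A_2)$ the nonlinear part is compact, so $J\in C^1(H^\ell,\r)$ and solutions of \eqref{eq:system} are exactly its critical points. For each $h\in\{1,\ldots,q\}$ put
\begin{equation*}
G_h(\bf u):=J'(\bf u)[(0,\ldots,\bar u_h,\ldots,0)]=\sum_{i\in I_h}\io(|\nabla u_i|^2+\lambda_i u_i^2)-\sum_{i\in I_h}\sum_{j=1}^\ell\beta_{ij}\io|u_i|^p|u_j|^p
\end{equation*}
and $\cN:=\{\bf u\in H^\ell:\bar u_h\ne 0\text{ and }G_h(\bf u)=0\text{ for every }h\}$. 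On $\cN$ the functional reduces to $J(\bf u)=\tfrac{p-1}{2p}\sum_i\io(|\nabla u_i|^2+\lambda_i u_i^2)$. Moreover $G_h=0$ combined with $\beta_{ij}\le 0$ on $\cK_h$ gives $\sum_{i\in I_h}\io(|\nabla u_i|^2+\lambda_i u_i^2)\le\sum_{(i,j)\in\cI_h}\beta_{ij}\io|u_i|^p|u_j|^p$, and Hölder plus the Sobolev embedding from $(A_2)$ yield $\sum_{(i,j)\in\cI_h}\beta_{ij}\io|u_i|^p|u_j|^p\le C\bigl(\sum_{i\in I_h}\io(|\nabla u_i|^2+\lambda_i u_i^2)\bigr)^p$, so each block contributes at least $c_1>0$ and $c:=\inf_\cN J\ge q\tfrac{p-1}{2p}c_1>0$.

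The central construction is a fiber map. Given $\bf u$ with each $\bar u_h\ne 0$, set $\bf t\star\bf u:=(t_1\bar u_1,\ldots,t_q\bar u_q)$ for $\bf t\in(0,\infty)^q$. With $a_h,A_h,B_{hk}$ denoting $\sum_{i\in I_h}\io(|\nabla u_i|^2+\lambda_i u_i^2)$, $\sum_{(i,j)\in\cI_h}\beta_{ij}\io|u_i|^p|u_j|^p>0$, and $\sum_{(i,j)\in I_h\times I_k}\beta_{ij}\io|u_i|^p|u_j|^p\le 0$ respectively,
\begin{equation*}
J(\bf t\star\bf u)=\sum_h\tfrac{t_h^2 a_h}{2}-\sum_h\tfrac{t_h^{2p}A_h}{2p}-\sum_{h<k}\tfrac{t_h^p t_k^p}{p}B_{hk}.
\end{equation*}
This map is nonnegative near $\bf t=0$, tends to $-\infty$ as $|\bf t|\to\infty$ (since $2p>2$ and $A_h>0$), and has strictly positive partial derivative along the inward direction on every face $\{t_h=0\}$, so it attains a maximum at some $\bf t^*(\bf u)\in(0,\infty)^q$, characterized by $\bf t^*(\bf u)\star\bf u\in\cN$. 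Now take a minimizing sequence $\bf u_n\subset\cN$; it is bounded in $H^\ell$, so up to a subsequence $\bf u_n\rh\bf u^*$ weakly, and by $(A_2)$, $u_{i,n}\to u_i^*$ strongly in $L^{2p}(\o)$, which makes every interaction integral converge. The block lower bound $A_h(\bf u_n)\ge c_1$ transfers to $A_h(\bf u^*)\ge c_1>0$, so every $\bar u^*_h\ne 0$, and weak lower semicontinuity of the quadratic part yields $G_h(\bf u^*)\le 0$. Applying the fiber map to $\bf u^*$ produces $\bf t^*\in(0,\infty)^q$ with $\bf t^*\star\bf u^*\in\cN$. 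Let $s:=\max_h t^*_h=t^*_{h_0}$ and $r_k:=t^*_k/s\le 1$; the Nehari identity at $h_0$ rewrites as
\begin{equation*}
a_{h_0}=s^{2p-2}\Bigl[A_{h_0}+\sum_{k\ne h_0}r_k^p B_{h_0 k}\Bigr]\ge s^{2p-2}\Bigl[A_{h_0}+\sum_{k\ne h_0}B_{h_0 k}\Bigr]\ge s^{2p-2}a_{h_0},
\end{equation*}
(first inequality: $r_k^p\le 1$ and $B_{h_0 k}\le 0$; second: $G_{h_0}(\bf u^*)\le 0$), so $s\le 1$ and consequently every $t^*_h\le 1$. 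Then
\begin{equation*}
c\le J(\bf t^*\star\bf u^*)=\tfrac{p-1}{2p}\sum_h(t^*_h)^2 a_h(\bf u^*)\le\tfrac{p-1}{2p}\sum_h a_h(\bf u^*)\le\liminf_n J(\bf u_n)=c,
\end{equation*}
which forces $t^*_h=1$ for every $h$ and hence $\bf u^*\in\cN$ attains $c$.

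Finally, a Lagrange-multiplier argument upgrades $\bf u^*$ to a critical point of $J$ on $H^\ell$. The relation $J'(\bf u^*)=\sum_h\mu_h G_h'(\bf u^*)$ tested against $(0,\ldots,\bar u^*_k,\ldots,0)$, combined with $G_k(\bf u^*)=0$, yields the linear system $0=\sum_h\mu_h M_{hk}$ with (at $\bf u^*$)
\begin{equation*}
M_{kk}=-2(p-1)a_k+p\sum_{j\ne k}B_{kj}<0,\qquad M_{hk}=-pB_{hk}\ge 0\ (h\ne k);
\end{equation*}
a direct computation using the symmetry $B_{hk}=B_{kh}$ shows that each column sum of $M$ equals $-2(p-1)a_k$, so $-M$ has positive diagonal, non-positive off-diagonal entries, and is strictly column-diagonally-dominant (by exactly $2(p-1)a_k>0$), hence invertible. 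Therefore $\mu=0$ and $J'(\bf u^*)=0$, so $\bf u^*$ is a block-wise nontrivial solution realizing $c$. The main obstacle is producing the fiber map $\bf t^*(\bf u)\in(0,\infty)^q$ for every block-wise nontrivial $\bf u$ and controlling $\max_h t^*_h$ on the weak limit, both of which hinge decisively on the block sign structure in $(B_1)$.
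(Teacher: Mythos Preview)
Your overall strategy is sound and genuinely different from the paper's. The paper works on the product of unit spheres $\cT$, defines $\Psi(\bf u)=\cJ(\bf s_{\bf u}\bf u)$ on the open set $\cU\subset\cT$ where the fiber point exists, and then uses Ekeland's principle together with a pseudogradient flow to produce a Palais--Smale minimizing sequence; compactness from $(A_2)$ closes the argument, and the correspondence between critical points of $\Psi$ and of $\cJ$ avoids any Lagrange-multiplier computation. You instead minimize directly on $\cN$, project the weak limit back via the fiber map, bound $\max_h t^*_h\le 1$ by a neat maximality trick, and finish with an explicit diagonally-dominant Lagrange-multiplier matrix. Your route is more elementary and tailored to minimization; the paper's machinery is heavier but is built to yield higher critical points and to work under the weaker hypothesis that only $(PS)_{c_0}$ holds.

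There is, however, a real gap. Your claim that $J(\bf t\star\bf u)\to-\infty$ as $|\bf t|\to\infty$ for \emph{every} block-wise nontrivial $\bf u$ is false: the repulsive cross-terms $-\tfrac{1}{p}B_{hk}t_h^pt_k^p$ are \emph{nonnegative} (since $B_{hk}\le 0$), and if $|B_{hk}|$ is large relative to the $A_h$ they can send $J(\bf t\star\bf u)\to+\infty$ along the diagonal. This is exactly why the paper defines $\cU$ as a possibly proper open subset of $\cT$ rather than all of it. Consequently you cannot assert the fiber point $\bf t^*(\bf u)$ exists in general. Fortunately you only need it for the weak limit $\bf u^*$, and there the extra information $G_h(\bf u^*)\le 0$ rescues you: it gives
\[
A_h(\bf u^*)\ \ge\ a_h(\bf u^*)-\sum_{k\ne h}B_{hk}(\bf u^*)\ =\ a_h(\bf u^*)+\sum_{k\ne h}|B_{hk}(\bf u^*)|\ >\ \sum_{k\ne h}|B_{hk}(\bf u^*)|,
\]
so the symmetric matrix with diagonal $A_h$ and off-diagonal $B_{hk}$ is strictly diagonally dominant, hence positive definite, and then $J(\bf t\star\bf u^*)=\tfrac12\sum_ha_ht_h^2-\tfrac{1}{2p}\,\bf s^{\!\top}C\bf s$ (with $s_h=t_h^p$) does go to $-\infty$ and the interior maximum exists. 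Insert this observation and your argument goes through; the $t^*_h\le 1$ estimate and the Lagrange-multiplier step are correct as written. (A minor wording issue: the inward partial derivative on the face $\{t_h=0\}$ is actually zero when $p>1$; what you need, and what holds, is that it is strictly positive for small $t_h>0$.)
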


The precise meaning of \emph{least energy block-wise nontrivial solution} is given in Definition \ref{def:least energy}. The proof of this result is obtained by adapting the variational approach introduced in \cite{cs}, and is given in Section \ref{sec:variational}.

If the system is purely competitive (i.e., $q=\ell$) any block-wise nontrivial solution is fully nontrivial. On the other hand, for any choice of $i_h\in I_h$, every solution to the purely competitive system
\begin{equation*} 
\begin{cases}
-\Delta v_h+ \lambda_{i_h}v_h = \sum\limits_{k=1}^\ell \beta_{i_hi_k}|v_k|^p|v_h|^{p-2}v_h, \\
v_h\in H,\qquad h=1,\ldots,q,
\end{cases}
\end{equation*}
gives rise to a block-wise nontrivial solution of \eqref{eq:system} whose $i_h$-th component is $v_h$, $h=1,\ldots,q$, and all other components are $0$. If $q\neq\ell$ this solution is not fully nontrivial. The following result, whose proof is given in Section \ref{sec:fully nontrivial}, provides existence of a fully nontrivial solution.

\begin{theorem} \label{thm:main_fullynontrivial}
Assume $(A_1)$ and $(B_1)$, and let $p\leq 2$. There exists a positive constant $C_*$ independent of $(\beta_{ij})$ - but depending on $\o$, $\lambda_i$, $p$ and $q$ - with the property that, if
\begin{itemize}
\item[$(B_2)$] either $p<2$ and 
$$\min_{\substack{i,j\in I_h \\ i\neq j}}\beta_{ij}\left(\frac{\min\limits_{k=1,\ldots,q}\max\limits_{i\in I_k}\beta_{ii}}{\max\limits_{i,j\in I_h}\beta_{ij}}\right)^\frac{p}{p-1}>C_*(\ell_h-\ell_{h-1}-1)^\frac{2p}{p-1}\sum_{(i,j)\in\cK_h}|\beta_{ij}|,$$
for every $h=1,\ldots,q$,
\item[] or $p=2$, \ $\lambda_i=:a_h$ for all $i\in I_h$, \ $\beta_{ij}=:b_h$ for all $i,j\in I_h$ with $i\neq j$ and 
$$b_h>\max_{i\in I_h}\beta_{ii}+C_*\left[\frac{(\ell_h-\ell_{h-1}-1)\,b_h}{\min\limits_{k=1,\ldots,q}\max\limits_{i\in I_k}\beta_{ii}}\right]^2\max_{i,j\in I_h}\sum_{\substack{m\in I_k \\ k\neq h}}|\beta_{im}-\beta_{jm}|$$
for every $h=1,\ldots,q$,
\end{itemize}
then every least energy block-wise nontrivial solution to the system \eqref{eq:system} is fully nontrivial.
\end{theorem}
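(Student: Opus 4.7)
I would argue by contradiction. Suppose $\mathbf{u}=(u_1,\ldots,u_\ell)$ is a least energy block-wise nontrivial solution for which some component $u_{i_0}$, with $i_0\in I_h$, vanishes, and choose $j_0\in I_h$ with $u_{j_0}\ne 0$. The plan is to insert a small multiple of $u_{j_0}$ in the empty $i_0$-slot, reproject the result onto the block-wise Nehari-type constraint set (by scaling each block $\bar u_k$ by some $t_k(\eps)$ close to $1$), and show that the resulting energy strictly decreases, contradicting the definition of least energy. Since $\mathbf{u}$ is already a critical point of $J$ on this constraint set, the reprojection affects the energy only at higher order in $\eps$, so what matters is the expansion of $J$ at the uncorrected test family $\mathbf{u}^\eps$ with $u_{i_0}^\eps=\eps u_{j_0}$ and $u_i^\eps=u_i$ for $i\ne i_0$.

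\textbf{Energy expansion and the case $p<2$.} A direct Taylor expansion and the symmetry of $(\beta_{ij})$ give
\begin{equation*}
J(\mathbf{u}^\eps)-J(\mathbf{u})=\tfrac{\eps^2}{2}\|u_{j_0}\|^2-\tfrac{\eps^p}{p}\sum_{j=1}^\ell \beta_{i_0j}\io|u_{j_0}|^p|u_j|^p+O(\eps^{2p}).
\end{equation*}
By $(B_1)$ the intra-block contribution ($j\in I_h$) is favourable and bounded below by $\min_{i\ne j\in I_h}\beta_{ij}\cdot\io|u_{j_0}|^{2p}$, while the inter-block contribution is unfavourable and dominated in absolute value by $\sum_{(i_0,m)\in\cK_h}|\beta_{i_0m}|\io|u_{j_0}|^p|u_m|^p$. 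For $p<2$ the $\eps^p$ term dominates $\eps^2$ as $\eps\to 0^+$, so the sign of the change is decided by the sign of the bracketed coefficient. Estimating $\io|u_{j_0}|^p|u_m|^p\le\|u_{j_0}\|_{2p}^p\|u_m\|_{2p}^p$ by Hölder and using uniform-in-$(\beta_{ij})$ bounds on the $L^{2p}$-norms of the components of any block-wise least-energy solution, one obtains precisely an inequality of the form $(B_2)$; the exponent $p/(p-1)$ emerges from the Nehari scaling $\|u_i\|_{2p}^{2p}\sim\beta^{-p/(p-1)}$, and the combinatorial factor $(\ell_h-\ell_{h-1}-1)^{2p/(p-1)}$ absorbs the possibility of several simultaneous zero components in block $h$.

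\textbf{The case $p=2$ and main difficulty.} When $p=2$ the $\eps^2$ and the $\eps^p$ contributions are of the same order, so the naive test-function perturbation no longer automatically lowers the energy. Here I would exploit the additional symmetry $\lambda_i=a_h$ and $\beta_{ij}=b_h$ inside each block, which is exactly what $(B_2)$ assumes: it makes the block-wise problem invariant under permutations of $I_h$, so one can look for a block-wise least-energy solution of the synchronized form $u_i=c_iw_h$ on $I_h$, reducing the fully nontrivial question to a finite-dimensional constrained minimization for $(c_i)_{i\in I_h}$. The quantitative hypothesis on $b_h$ is then exactly what prevents the minimizer from hitting the boundary $\{\prod_{i\in I_h}c_i=0\}$, via a Lin--Wei style comparison. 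In both regimes, the technical heart of the argument, and the main obstacle, is to extract the constant $C_*$ depending only on $\Omega,\lambda_i,p,q$: this requires a priori $L^{2p}$-bounds from below on any nontrivial component and from above on all components, both obtained from the Nehari identities together with the coercivity in $(A_1)$ and the compact embedding in $(A_2)$, while carefully tracking the $(\beta_{ij})$-dependence so that only the ratios appearing in $(B_2)$ survive.
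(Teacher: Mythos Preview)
Your treatment of the case $p<2$ is essentially the paper's argument: insert $\eps u_{j_0}$ in the vacant slot, reproject onto $\cN$ via block-wise scalings $t_h(\eps)$, and read off the sign of the $\eps^p$ coefficient. Your claim that the reprojection is higher order is correct, though it is not quite ``because $\mathbf{u}$ is a critical point on $\cN$''; what one actually needs is that $\mathbf{1}$ is a \emph{non-degenerate} maximum of the fibering map $\mathbf{s}\mapsto\cJ(\mathbf{s}\mathbf{u})$ (the paper verifies this by showing the Jacobian is strictly diagonally dominant), so that $|\mathbf{t}(\eps)-\mathbf{1}|=O(\eps^p)$ and hence $\cJ(\mathbf{t}(\eps)\mathbf{u}_\eps)-\cJ(\mathbf{u}_\eps)=O(\eps^{2p})$. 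This is a detail, not a gap.

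The genuine gap is in your $p=2$ strategy. Permutation invariance within a block does \emph{not} force a least energy block-wise nontrivial solution to be of the synchronized form $u_i=c_i w_h$; symmetry of the functional only says that permuting components of a minimizer yields another minimizer, not that the components are proportional to a common profile. So your reduction to a finite-dimensional problem for $(c_i)$ is unjustified, and even if a synchronized minimizer existed this would not prove that \emph{every} least energy solution is fully nontrivial, which is what the theorem asserts.

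The paper does something both simpler and sharper for $p=2$: it keeps the same test-function perturbation and observes that the leading coefficient is (up to a positive factor) $\|\varphi\|_{i_0}^2-\sum_j\beta_{i_0j}\int_\Omega|\varphi|^2|u_j|^2$ with $\varphi=u_{j_0}$. Now the key step you are missing: since $\mathbf{u}$ is an actual critical point of $\cJ$, the component $u_{j_0}$ solves its own equation, giving $\|u_{j_0}\|_{j_0}^2=\sum_j\beta_{j_0j}\int_\Omega|u_{j_0}|^2|u_j|^2$. Because $\lambda_{i_0}=\lambda_{j_0}$ one has $\|\varphi\|_{i_0}=\|u_{j_0}\|_{j_0}$, so the leading coefficient becomes $\sum_j(\beta_{j_0j}-\beta_{i_0j})\int_\Omega|u_{j_0}|^2|u_j|^2$. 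The intra-block terms with $j\in I_h\setminus\{i_0,j_0\}$ cancel since $\beta_{i_0j}=\beta_{j_0j}=b_h$, the term $j=j_0$ contributes $(\beta_{j_0j_0}-b_h)\int|u_{j_0}|^4<0$, and the inter-block remainder is controlled by the second line of $(B_2)$. This is where the extra structural hypotheses for $p=2$ are actually used, and it avoids any synchronization ansatz.
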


If $\o$ is a bounded domain, assumption $(A_1)$ holds true if $\lambda_i>-\lambda_1(\o)$, where $\lambda_1(\o)$ is the first Dirichlet eigenvalue of $-\Delta$ in $\o$, and $(A_2)$ is satisfied if the nonlinear term is subcritical. So combining Theorems \ref{thm:main_block} and \ref{thm:main_fullynontrivial} we obtain the following result.

\begin{theorem} \label{thm:bounded}
If $\o$ is bounded, $\lambda_i>-\lambda_1(\o)$ for all $i=1,\ldots,\ell$, $1<p<\frac{N}{N-2}$ and $(\beta_{ij})$ satisfies $(B_1)$ and $(B_2)$, the system
\begin{equation*}
\begin{cases}
-\Delta u_i+ \lambda_iu_i = \sum\limits_{j=1}^\ell \beta_{ij}|u_j|^p|u_i|^{p-2}u_i, \\
u_i\in H_0^1(\o),\qquad i=1,\ldots,\ell,
\end{cases}
\end{equation*}
has a fully nontrivial solution.
\end{theorem}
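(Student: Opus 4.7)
The plan is to view Theorem \ref{thm:bounded} as a direct corollary of Theorems \ref{thm:main_block} and \ref{thm:main_fullynontrivial}: the work consists of verifying that the abstract hypotheses $(A_1)$ and $(A_2)$ hold in this concrete setting, after which both theorems apply and chain together.

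First I would verify $(A_1)$. Since $\o$ is bounded, Poincaré's inequality yields
$$\lambda_1(\o)\io u^2\le \io |\nabla u|^2\qquad\text{for all }u\in H_0^1(\o),$$
so for $\lambda_i>-\lambda_1(\o)$, writing $\lambda_i=-\mu_i$ with $\mu_i<\lambda_1(\o)$, we get
$$\io\bigl(|\nabla u|^2+\lambda_i u^2\bigr)\ge \Bigl(1-\tfrac{\mu_i^+}{\lambda_1(\o)}\Bigr)\io|\nabla u|^2,$$
which gives a coercive, positive-definite quadratic form equivalent to the standard $H_0^1(\o)$ norm, so $-\Delta+\lambda_i$ is well-defined and coercive on $H=H_0^1(\o)$.

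Next I would verify $(A_2)$. The subcritical assumption $1<p<\frac{N}{N-2}$ rewrites as $2p<\frac{2N}{N-2}=2^*$, so the Rellich–Kondrachov theorem (applicable because $\o$ is bounded) gives the compactness of $H_0^1(\o)\hookrightarrow L^{2p}(\o)$. Thus both $(A_1)$ and $(A_2)$ hold. Applying Theorem \ref{thm:main_block} with $H=H_0^1(\o)$ produces a least energy block-wise nontrivial solution $\bf u$ to the system. Since $(B_1)$ and $(B_2)$ are assumed, Theorem \ref{thm:main_fullynontrivial} asserts that every such least energy block-wise nontrivial solution is fully nontrivial; in particular $\bf u$ has all components nonzero, which is the desired conclusion.

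There is no real obstacle in this argument—the theorem is essentially a packaging result. The only subtlety worth flagging is the coherence of the constant $C_*$: it depends on $\o$, $\lambda_i$, $p$ and $q$ but not on $(\beta_{ij})$, so once $\o$, the $\lambda_i$'s and $p$ are fixed as in Theorem \ref{thm:bounded}, the $C_*$ appearing in $(B_2)$ is a concrete number and condition $(B_2)$ is a genuine hypothesis on the matrix $(\beta_{ij})$, which is inherited unchanged from Theorem \ref{thm:main_fullynontrivial}.
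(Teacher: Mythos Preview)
Your proposal is correct and matches the paper's approach exactly: the paper states Theorem~\ref{thm:bounded} as an immediate consequence of Theorems~\ref{thm:main_block} and~\ref{thm:main_fullynontrivial}, noting only that $(A_1)$ holds when $\lambda_i>-\lambda_1(\o)$ and $(A_2)$ holds in the subcritical range. Your verification of $(A_1)$ via Poincar\'e and $(A_2)$ via Rellich--Kondrachov simply spells out what the paper leaves implicit.
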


It is well known that compactness is more likely to hold true in a symmetric setting. Symmetries are also helpful to obtain sign-changing solutions. Symmetric versions of Theorems \ref{thm:main_block} and \ref{thm:main_fullynontrivial} yield the following results. 

\begin{theorem} \label{thm:entire subcritical}
If $1<p<\frac{N}{N-2}$ and $(\beta_{ij})$ satisfies $(B_1)$ and $(B_2)$, the system
\begin{equation*} 
\begin{cases}
-\Delta u_i+ u_i = \sum\limits_{j=1}^\ell \beta_{ij}|u_j|^p|u_i|^{p-2}u_i, \\
u_i\in H^1(\rn),\qquad i=1,\ldots,\ell,
\end{cases}
\end{equation*}
has a fully nontrivial solution whose components are positive and radial. 

If $N=4$ or $N\geq 6$ it has also and a fully nontrivial solution whose components are nonradial and change sign.
\end{theorem}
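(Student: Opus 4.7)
The plan is to restrict the variational problem to suitable symmetric subspaces of $H^1(\rn)$ on which compactness is restored and on which equivariance under a sign character forces the desired symmetry and sign properties; then apply Theorems \ref{thm:main_block} and \ref{thm:main_fullynontrivial} together with Palais' principle of symmetric criticality.

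For the positive radial part I would take $H:=H^1_{\mathrm{rad}}(\rn)$. Since $p$ is subcritical, Strauss' lemma gives a compact embedding $H\hookrightarrow L^{2p}(\rn)$, and $-\Delta+1$ is coercive on $H$, so $(A_1)$ and $(A_2)$ hold. Theorem \ref{thm:main_block} applied in this radial setting yields a least energy block-wise nontrivial solution $\bf u$, which is lifted by symmetric criticality to a solution of the original system in $H^1(\rn)$. Assumption $(B_2)$ together with (the symmetric version of) Theorem \ref{thm:main_fullynontrivial} upgrades $\bf u$ to a fully nontrivial one. Replacing each component by its absolute value preserves both the energy and the block-wise Nehari constraint, so one may assume $u_i\ge 0$; the strong maximum principle applied componentwise to $-\Delta u_i+u_i=u_i^{p-1}\sum_j\beta_{ij}|u_j|^p$ then gives $u_i>0$.

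For the nonradial sign-changing part, valid in dimensions $N=4$ or $N\ge 6$, I would use a Bartsch--Willem type construction. Writing $N=4+m$, the condition $N=4$ or $N\ge 6$ is equivalent to $m=0$ or $m\ge 2$; the excluded value $N=5$ would force $m=1$, in which case $O(1)$-orbits are finite and the compactness argument below breaks down. Decompose $\rn=\r^2\times\r^2\times\r^m$, set $G:=(O(2)\times O(2)\times O(m))\rtimes\langle\tau\rangle$ with $\tau(x,y,z):=(y,x,z)$, and let $\phi:G\to\{\pm 1\}$ be the nontrivial homomorphism trivial on the product factor and sending $\tau$ to $-1$. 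Define
$$H^\phi:=\{u\in H^1(\rn):u(gx)=\phi(g)\,u(x)\ \text{for every}\ g\in G\}.$$
Because $m\neq 1$, every $G$-orbit in $\rn\setminus\{0\}$ is infinite, and the standard Bartsch--Willem compactness lemma gives a compact embedding $H^\phi\hookrightarrow L^{2p}(\rn)$ for subcritical $p$. Every nonzero $u\in H^\phi$ is odd with respect to $\tau$, hence both nonradial and sign-changing. Applying Theorems \ref{thm:main_block} and \ref{thm:main_fullynontrivial} inside $H^\phi$ and invoking symmetric criticality then produces the desired solution.

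The main obstacle is ensuring that the variational constructions of Theorems \ref{thm:main_block} and \ref{thm:main_fullynontrivial} transfer to the equivariant subspaces $H^1_{\mathrm{rad}}$ and $H^\phi$. The minimization underlying Theorem \ref{thm:main_block} adapts with no essential change since the energy and the block-wise Nehari set are $G$-invariant. Theorem \ref{thm:main_fullynontrivial} is more delicate: its proof compares the block-wise energy level to that of suitable fully nontrivial test configurations, and one must check that such configurations live in (or can be averaged into) the equivariant subspace without enlarging the constant $C_*$ in $(B_2)$. In the radial case one uses radial bumps; in the $\phi$-equivariant case one antisymmetrizes under $\tau$ via $u\mapsto\tfrac12(u-u\circ\tau)$, which preserves the relevant norms and leaves the energy estimates behind $(B_2)$ intact.
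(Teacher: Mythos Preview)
Your approach is correct and essentially identical to the paper's: the paper also takes $G=O(N)$ for the positive radial solution and the Bartsch--Willem group generated by $O(2)\times O(2)\times O(N-4)$ and the swap $\tau(x,y,z)=(y,x,z)$ for the sign-changing one, invoking the symmetric versions of Theorems~\ref{thm:main_block} and~\ref{thm:main_fullynontrivial} together with the principle of symmetric criticality.

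Your final paragraph, however, misreads the mechanism of Theorem~\ref{thm:main_fullynontrivial}. That proof does not construct external ``fully nontrivial test configurations''; it assumes a semitrivial minimizer $\mathbf u=(0,u_2,\ldots,u_\ell)\in\cN$ exists and fills the empty slot with $\eps\varphi$ where $\varphi:=u_{i_1^*}$ is one of the \emph{existing} nonzero components of $\mathbf u$. Since $\mathbf u$ already lies in the symmetric subspace $H^G$ (resp.\ $H^\phi$), so does $\varphi$, and hence so does the perturbed element $\mathbf u_\eps$. No antisymmetrization step is needed, and your assertion that $u\mapsto\tfrac12(u-u\circ\tau)$ ``preserves the relevant norms'' is not correct in general. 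The constant $C_*$ does change with the subspace (through $S$ and $d_1$), but the statement of Theorem~\ref{thm:main_fullynontrivial} already allows $C_*$ to depend on the ambient space, so nothing further needs checking.
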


\begin{theorem} \label{thm:entire critical}
If $p=\frac{N}{N-2}$ and $(\beta_{ij})$ satisfies $(B_1)$ and $(B_2)$, the critical system
\begin{equation*}
\begin{cases}
-\Delta u_i = \sum\limits_{j=1}^\ell \beta_{ij}|u_j|^p|u_i|^{p-2}u_i, \\
u_i\in D^{1,2}(\rn),\qquad i=1,\ldots,\ell,
\end{cases}
\end{equation*}
has a fully nontrivial solution whose components are positive.

If $N=3$ or $N\geq 5$, it has also a a fully nontrivial solution whose components change sign.
\end{theorem}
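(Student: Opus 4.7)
The plan is to adapt the variational framework of Theorems \ref{thm:main_block} and \ref{thm:main_fullynontrivial} to the critical case $p=\frac{N}{N-2}$ on the whole space, where assumption $(A_2)$ fails because the scaling invariance of the critical nonlinearity prevents $D^{1,2}(\rn)\hookrightarrow L^{2p}(\rn)$ from being compact. The idea is to restrict the functional to a subspace of functions that are invariant, or equivariant, under the action of a suitable closed subgroup $G$ of $O(N)$, and to invoke the principle of symmetric criticality to transfer critical points back to $D^{1,2}(\rn)^\ell$.

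For the positive solution, I would take $G=O(N)$ and work with radial functions in $D^{1,2}(\rn)^\ell$. Although radial critical Sobolev functions on the whole space still fail to embed compactly, a Struwe-type concentration–compactness analysis shows that any Palais--Smale sequence of the energy functional loses compactness only through a finite sum of rescaled Aubin--Talenti bubbles centered at the origin. It then suffices to verify that the least energy block-wise nontrivial level, defined on the Nehari-type set introduced in Section \ref{sec:variational} restricted to the radial subspace, lies strictly below the first bubbling threshold; this is achieved by a standard test-function construction using truncated bubbles, relying on the positivity of the intra-block entries $\beta_{ij}$. Existence of a block-wise nontrivial solution follows as in Theorem \ref{thm:main_block}, positivity of each component is obtained by replacing $u_i$ with $|u_i|$ (which does not increase the energy since $\beta_{ii}>0$ and all intra-block $\beta_{ij}\ge 0$), and $(B_2)$ together with the argument of Theorem \ref{thm:main_fullynontrivial} forces full nontriviality.

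For the sign-changing solution under the hypothesis $N=3$ or $N\ge 5$, I would select a closed subgroup $G\le O(N)$ and a continuous homomorphism $\phi\colon G\to\{\pm1\}$ with $\phi\not\equiv 1$, and work on the $\phi$-equivariant subspace
$$\bigl\{u\in D^{1,2}(\rn):u(gx)=\phi(g)u(x)\ \text{for all}\ g\in G\bigr\},$$
whose nonzero elements automatically change sign. The dimension restriction ``$N=3$ or $N\ge 5$'' is precisely what guarantees, as exploited in \cite{cs}, the existence of such a pair $(G,\phi)$ for which every $G$-orbit in $\rn\smallsetminus\{0\}$ has cardinality at least two; this raises the first concentration threshold for the $\phi$-equivariant functional by a factor depending on that cardinality. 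The same scheme then applies: establish the enlarged critical threshold, use symmetric criticality, and run the existence argument of Theorem \ref{thm:main_block} together with the fully-nontriviality criterion of Theorem \ref{thm:main_fullynontrivial}.

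The main obstacle is the energy-level comparison in each case: proving that the least energy block-wise nontrivial level lies strictly below the first noncompactness threshold of the $G$-invariant (respectively $\phi$-equivariant) functional. This is the classical source of difficulty in critical problems on $\rn$ and is where the dimension hypothesis $N=3$ or $N\ge 5$ enters in the sign-changing case. Once the strict inequality is in place, a Lions-type lemma restores compactness along the minimizing sequence, and the remainder of the proof proceeds as in the subcritical setting.
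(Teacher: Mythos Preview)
Your proposal has a genuine gap: linear actions of closed subgroups $G\le O(N)$ on $D^{1,2}(\rn)$ do \emph{not} restore compactness of the critical embedding $D^{1,2}(\rn)\hookrightarrow L^{2^*}(\rn)$, because every such action commutes with the dilations $u\mapsto\lambda^{(N-2)/2}u(\lambda\,\cdot\,)$. In particular, radial symmetry kills translations but leaves the full dilation group acting on $\cN$, so minimizing sequences can still concentrate or vanish. Your fallback---a Struwe decomposition plus a strict inequality ``$c_0<\text{first bubbling threshold}$''---is the right paradigm on a bounded domain, but on $\rn$ the problem is scale-invariant and the least energy level $c_0$ \emph{is} the energy of a (system of) bubbles; there is no lower threshold to beat, and the test-function computation you describe cannot produce a strict inequality. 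The same objection applies to your $\phi$-equivariant construction for the sign-changing part.

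The paper takes a different route that avoids concentration--compactness entirely. As stated at the beginning of the subsection on entire critical solutions, ``in the critical case linear group actions do not provide compactness; one needs to consider conformal actions.'' Concretely, one lets a closed subgroup $\Gamma\le O(N+1)$ act isometrically on $\mathbb{S}^N$ and transfers this action to $\rn$ via stereographic projection, obtaining a \emph{conformal} action on $D^{1,2}(\rn)$ by $\gamma u:=|\det\tilde\gamma'|^{1/2^*}\,u\circ\tilde\gamma$. For $\Gamma=O(m)\times O(n)$ with $m+n=N+1$ and $m,n\ge 2$, the embedding $D^{1,2}(\rn)^\Gamma\hookrightarrow L^{2^*}(\rn)$ is genuinely compact (see \cite{cp}), so Theorems~\ref{thm:main_block} and~\ref{thm:main_fullynontrivial} apply directly and yield the positive solution. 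For the sign-changing part one takes $\Gamma$ generated by $K=O(2)\times O(2)\times O(N-3)$ acting on $\r^{N+1}$ together with the swap $\varrho(x,y,z)=(y,x,z)$, and $\phi(\varrho)=-1$; compactness of $D^{1,2}(\rn)^K\hookrightarrow L^{2^*}(\rn)$ holds precisely when $N-3\ge 2$ or $N-3=0$, which is the source of the restriction $N=3$ or $N\ge 5$. This is not the mechanism you describe (orbit cardinality under a linear action), and the reference \cite{cs} you invoke does not contain it.
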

 
The proof of the last two theorems and further examples are given in Section \ref{sec:symmetries}. They include, for instance, existence and multiplicity results for \eqref{eq:system} with supercritical nonlinearities ($p>\frac{N}{N-2}$), or in an exterior domain.

Assumption $(A_2)$ may be considerably weakened. As we shall see below, the solution given by Theorem \ref{thm:main_block} minimizes a $\cC^1$-functional $\Psi:\cU\to\r$ defined on an open subset $\cU$ of a smooth Hilbert manifold. So compactness is only needed at the level $c_0:=\inf_\cU\Psi$; see Theorem \ref{thm:block} for the weaker statement.

For $p=2$ our condition $(B_2)$ is basically the same as in \cite[Theorem 1.5]{st} and it is weaker than the one in \cite[Theorem 1.4]{st}. Our approach, however, is different and it has the advantage that it can be used to treat the case $p<2$.

If the system \eqref{eq:system} is purely cooperative (i.e., $q=1$) and $p<2$, assumption $(B_2)$ is satisfied and, so, Theorems \ref{thm:main_block} and \ref{thm:main_fullynontrivial} yield the existence of a fully nontrivial solution. This stands in contrast with the situation for $p=2$ where purely cooperative systems do not always have a positive solution; see, e.g., \cite[Theorem 0.2]{bwa} or \cite[Theorem 1]{s}. For purely cooperative systems with $p=2$ our condition $(B_2)$ is basically that in \cite[Corollary 2.3]{lw}.

Finally, we obtain a new result concerning existence of synchronized solutions when all the $\lambda_i$'s coincide, i.e., $\lambda_i:=\lambda$ for all $i=1,\dots,\ell$. We say that $\bf u=(u_1,\dots,u_\ell)$ is a \emph{fully synchronized} solution if $u_i=c_i u$, where $u$ is a nontrivial solution to  the single equation
$$-\Delta u + \lambda u = |u|^{2p-2}u,\qquad u\in H,$$ 
and  $\bf c=(c_1,\ldots,c_\ell)\in\r^\ell$ solves  the algebraic system
\begin{equation}\label{ss1}
c_i=\sum_{j=1}^\ell \beta_{ij} |c_j|^p|c_i|^{p-2}c_i,\quad c_i>0,\quad\text{for every \ }i=1,\dots,\ell.
\end{equation}
There are some results concerning the solvability of \eqref{ss1}. The easiest case is when $p=2$ and $\ell=2$. Then a solution to \eqref{ss1} exists if and only if
$$\beta_{12}\in (-\sqrt{\beta_{11}\beta_{22}},\min\{\beta_{11},\beta_{22}\} )\cup(\max\{\beta_{11},\beta_{22}\},+\infty).$$
If $p=2$ and $\ell\ge2$,  a solution to \eqref{ss1} exists when  $\beta_{ij}:=\beta$ for all $i\not=j$ and 
 $\beta\in \left(\overline\beta,\min\{\beta_{ii}\}\right)\cup\left(\max\{\beta_{ii}\},+\infty\right)$ for some $\overline\beta<0 $ (see \cite[Proposition 2.1]{b}), while if $p=\frac{N}{N-2}<2$ and $\ell=2$ a solution to \eqref{ss1} always exists provided $\beta_{12}>0$ (see \cite[Theorem 1.1]{cz2}). The following theorem complements these results.

\begin{theorem} \label{thm:synchronized}
Let $p<2$ and assume that the system \eqref{eq:system} is purely cooperative (i.e., $\beta_{ii}>0$ and $\beta_{ij}\geq 0$ for all $i,j=1,\ldots,\ell$, $i\neq j$) and that $\lambda_i=\lambda$ for all $i=1,\ldots,\ell$. Then, for each $i$, there exists $c_i>0$ such that $(c_1 u,\dots,c_\ell u)$ is a solution to \eqref{eq:system} for every solution $u$ to the equation
$$-\Delta u + \lambda u = |u|^{2p-2}u,\qquad u\in H.$$
\end{theorem}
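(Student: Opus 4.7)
The plan is to substitute the ansatz $u_i=c_i u$ (with $c_i>0$) into \eqref{eq:system} to reduce the PDE system to the purely algebraic system \eqref{ss1}, and then to solve \eqref{ss1} by a Brouwer fixed point argument together with a decoupling induction on $\ell$. Using that $u$ satisfies $-\Delta u+\lambda u=|u|^{2p-2}u$ and cancelling the common nonvanishing factor $|u|^{2p-2}u$, \eqref{eq:system} becomes exactly \eqref{ss1}, which in the positive variables $c_i$ reads
\begin{equation*}
c_i^{2-p}=\sum_{j=1}^\ell \beta_{ij}\, c_j^p, \qquad c_i>0, \qquad i=1,\ldots,\ell.
\end{equation*}
Producing such a $\mathbf{c}=(c_1,\ldots,c_\ell)$ is now a finite-dimensional problem.

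To solve it, I would introduce $T:\r_{\ge 0}^\ell\setminus\{0\}\to\r_{\ge 0}^\ell$ by $T(\mathbf{c})_i:=\bigl(\sum_j \beta_{ij}c_j^p\bigr)^{1/(2-p)}$ and work on the simplex $\Sigma:=\{\mathbf{c}\ge 0:\sum_i c_i=1\}$. Since $\beta_{ii}>0$, for any $\mathbf{c}\in\Sigma$ and any index $i_0$ with $c_{i_0}>0$ one has $T(\mathbf{c})_{i_0}\ge(\beta_{i_0 i_0})^{1/(2-p)}c_{i_0}^{p/(2-p)}>0$, so $\tilde T(\mathbf{c}):=T(\mathbf{c})/\sum_i T(\mathbf{c})_i$ is a continuous self-map of $\Sigma$. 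Brouwer's fixed point theorem then yields $\mathbf{c}^*\in\Sigma$ and $\lambda>0$ with $T(\mathbf{c}^*)=\lambda\mathbf{c}^*$. Because $T$ is positively homogeneous of degree $p/(2-p)\ne 1$, the rescaled vector $s\mathbf{c}^*$ with $s=\lambda^{(2-p)/(2-2p)}>0$ (well defined as $p\ne 1$) satisfies the algebraic system on the support of $\mathbf{c}^*$; the hypothesis $p<2$ is what makes $T$ and this rescaling meaningful.

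The main obstacle is that $\mathbf{c}^*$ need not have every coordinate positive, and I would handle it by induction on $\ell$. The base case $\ell=1$ is $c^{2-p}=\beta_{11}c^p$, solved by $c=\beta_{11}^{1/(2-2p)}>0$. For the inductive step, suppose $c_k^*=0$ for some $k$. Then $T(\mathbf{c}^*)_k=\lambda c_k^*=0$ together with $\beta_{kj}\ge 0$ forces $\beta_{kj}=0$ for every $j$ with $c_j^*>0$; by symmetry of $(\beta_{ij})$ this decouples $S:=\{i:c_i^*>0\}$ from $S^c$ in the matrix. The rescaled $\mathbf{c}^*$ already solves the subsystem on $S$, and the submatrix $(\beta_{ij})_{i,j\in S^c}$ inherits the hypotheses of the theorem, so the inductive hypothesis (applied with $|S^c|<\ell$) produces a fully positive solution there. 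Concatenating the two yields a fully positive solution of the full algebraic system, and hence the desired fully synchronized solution of \eqref{eq:system}.
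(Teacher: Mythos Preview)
Your argument is correct and takes a genuinely different route from the paper's. The paper proves Theorem~\ref{thm:synchronized} variationally: it minimizes the finite-dimensional energy $J(\mathbf c)=\tfrac12|\mathbf c|^2-\tfrac{1}{2p}\sum_{i,j}\beta_{ij}|c_i|^p|c_j|^p$ over the Nehari set $M=\{\mathbf c\neq 0:\langle\nabla J(\mathbf c),\mathbf c\rangle=0\}$, and then shows (Lemma~\ref{lem:synchronized}$(i)$) by an $\varepsilon$-perturbation in a vanishing coordinate that, for $p<2$, a minimizer cannot have a zero component because the cross term $\varepsilon^p\sum_{j\ge2}\beta_{1j}|c_j|^p$ dominates. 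You instead cast the algebraic system as a fixed-point equation for the homogeneous map $T(\mathbf c)_i=(\sum_j\beta_{ij}c_j^p)^{1/(2-p)}$, use Brouwer on the simplex to find an eigenvector, rescale via the homogeneity (here $1<p<2$ ensures both that $T$ extends continuously to $\partial\Sigma$ and that the degree $p/(2-p)\neq 1$), and then upgrade to a fully positive solution by the decoupling/induction step.

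What each approach buys: the paper's method is parallel to the PDE strategy of Sections~\ref{sec:variational}--\ref{sec:fully nontrivial} and yields not just \emph{a} solution of \eqref{ss1} but the \emph{least-energy} one. Your Brouwer-plus-induction argument is more elementary and self-contained, and it has the pleasant feature of explicitly handling the degenerate situation in which the zero coordinate is completely decoupled from the support of $\mathbf c^*$ (i.e.\ $\beta_{1j}=0$ for every $j$ with $c_j^*>0$): in that case the paper's leading perturbation term $\sum_{j\ge2}\beta_{1j}|c_j|^p$ vanishes and the contradiction in Lemma~\ref{lem:synchronized}$(i)$ is not immediate, whereas your block-diagonal splitting and strong induction on $\ell$ dispose of it cleanly.
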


The proof of this result relies on a simple minimization argument and it is given in Section \ref{sec:locked}.

\section{A simple variational approach} \label{sec:variational}

We assume throughout that $(A_1)$ and $(B_1)$ hold true. 

Recall that $H$ is either $H_0^1(\o)$ or $D^{1,2}_0(\o)$. Assumption $(A_1)$ asserts that
$$\|v\|_i:=\Big(\int_\Omega(|\nabla v|^2+\lambda_iv^2)\Big)^{1/2}$$
is a norm in $H$, equivalent to the standard one. Let $H_i$ denote the space $H$ equiped with this norm and, for the partition in assumption $(B_1)$, define
\begin{align*}
\cH_h&:=H_{\ell_{h-1}+1}\times\cdots\times H_{\ell_h},\qquad h=1,\ldots,q,\\
\cH&:=H_1\times\cdots\times H_\ell=\cH_1\times\cdots\times\cH_q.
\end{align*}
A point in $\cH_h$ will be denoted by $\bar u_h$, a point in $\cH$ by
$$\bf u=(\bar u_1,\ldots,\bar u_q)=(u_1,\ldots,u_\ell)\quad\text{with \ }\bar u_h\in\cH_h,\quad u_i\in H_i,$$
and their norms by
$$\|\bar u_h\|:=\Big(\sum_{i\in I_h}\|u_i\|_i^2\Big)^{1/2}\qquad\text{and}\qquad\|\bf u\|:=\Big(\sum_{h=1}^q\|\bar u_h\|^2\Big)^{1/2}.$$
Let $\cJ:\cH\to\r$ be the functional given by 
$$\cJ(u_1,\ldots,u_\ell) := \frac{1}{2}\sum_{i=1}^\ell\|u_i\|_i^2 - \frac{1}{2p}\sum_{i,j=1}^\ell\beta_{ij}\io |u_i|^p|u_j|^p.$$
This functional is of class $\cC^1$ and its critical points are the solutions to the system \eqref{eq:system}. The block-wise nontrivial solutions belong to the set
$$\cN:= \{\bf u\in\cH:\|\bar u_h\|\neq 0\text{ \ and \ }\partial_{\bar u_h}\cJ(\bf u)\bar u_h=0 \text{ \ for all \ } h=1,\ldots,q\}.$$
Note that
\begin{align*}
&\partial_{\bar u_h}\cJ(\bf u)\bar u_h=\|\bar u_h\|^2 - \sum_{(i,j)\in\cI_h}\io\beta_{ij}|u_i|^p|u_j|^p - \sum_{(i,j)\in\cK_h}\io\beta_{ij}|u_i|^p|u_j|^p,
\end{align*}
with $\cI_h$ and $\cK_h$ as defined in assumption $(B_1)$, and that
$$\cJ(\bf u)=(\tfrac{1}{2}-\tfrac{1}{2p})\|\bf u\|^2\qquad\text{if \ }u\in\cN.$$

\begin{definition} \label{def:least energy}
A block-wise nontrivial solution $\bf u$ to the system \eqref{eq:system} such that $\cJ(\bf u)=\inf_\cN \cJ$ will be called a least energy block-wise nontrivial solution.
\end{definition}

In fact, we will show that any minimizer of $\cJ$ on $\cN$ is a critical point of $\cJ$, i.e., a block-wise nontrivial solution to \eqref{eq:system}. We follow the approach introduced in \cite{cs}. 

\begin{lemma} \label{lem:nehari}
\begin{itemize}
\item[$(i)$]There exists $d_0>0$ such that $\min_{h=1,\ldots,q}\|\bar u_h\|^2\geq d_0$ for every $(\bar u_1,\ldots,\bar u_q)\in\cN$. As a consequence, we have that $\cN$ is closed in $\cH$ and \ $\inf_{\bf u\in\cN}\cJ(\bf u)>0$.
\item[$(ii)$]There exists $d_1>0$ independent of $(\beta_{ij})$ such that
$$\inf_{\bf u\in\cN}\cJ(\bf u)\leq d_1\left(\min_{h=1,\ldots,q}\max_{i\in I_h}\beta_{ii}\right)^{-\frac{1}{p-1}}.$$
\end{itemize}
\end{lemma}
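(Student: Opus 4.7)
The plan is as follows.

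For (i), I would exploit the sign condition in $(B_1)$: for $\bf u\in\cN$, the Nehari identity $\partial_{\bar u_h}\cJ(\bf u)\bar u_h=0$ together with $\beta_{ij}\le 0$ on $\cK_h$ yields the one-sided bound
$$\|\bar u_h\|^2\le\sum_{(i,j)\in\cI_h}\beta_{ij}\io|u_i|^p|u_j|^p.$$
Bounding the right-hand side via the continuous Sobolev embedding $H\hookrightarrow L^{2p}(\o)$ that is implicit in the well-definedness of $\cJ$, together with the trivial inequality $\|u_i\|_i\le\|\bar u_h\|$ for $i\in I_h$, produces $\|\bar u_h\|^2\le C\|\bar u_h\|^{2p}$, whence $\|\bar u_h\|\ge d_0>0$ since $p>1$. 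Closedness of $\cN$ is then immediate: if $\bf u^n\in\cN$ converges to $\bf u$ in $\cH$, the lower bound passes to the limit (so every block stays nontrivial), and the Nehari constraints pass to the limit by continuity of $\cJ'$. The strict positivity $\inf_\cN\cJ>0$ follows from the identity $\cJ(\bf u)=(\tfrac12-\tfrac1{2p})\|\bf u\|^2$ on $\cN$ combined with $\|\bf u\|^2\ge qd_0$.

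For (ii), the strategy is to exhibit an explicit competitor in $\cN$ whose cost depends on the matrix $(\beta_{ij})$ only through the quantity appearing on the right. Fix, independently of the coupling matrix, $q$ pairwise disjoint open balls $B_1,\ldots,B_q\subset\o$ and pick $w_h\in H\setminus\{0\}$ with $\mathrm{supp}\,w_h\subset\overline{B_h}$. In each block $h$, select $i_h\in I_h$ achieving $\beta_{i_hi_h}=\max_{i\in I_h}\beta_{ii}$ and look for $\bf u\in\cN$ of the form $u_{i_h}=t_hw_h$ with every other component zero. The disjoint-support trick kills all cross terms, so the Nehari equations decouple into scalar equations for $t_h$, uniquely solved by
$$t_h^{2p-2}=\frac{\|w_h\|_{i_h}^2}{\beta_{i_hi_h}\,\|w_h\|_{L^{2p}(\o)}^{2p}}.$$
Plugging back, the $h$-th block norm satisfies
$$\|\bar u_h\|^2=\beta_{i_hi_h}^{-\frac{1}{p-1}}\,\frac{\|w_h\|_{i_h}^{\frac{2p}{p-1}}}{\|w_h\|_{L^{2p}(\o)}^{\frac{2p}{p-1}}}\le\kappa_h\,\bigl(\min_{k=1,\ldots,q}\max_{i\in I_k}\beta_{ii}\bigr)^{-\frac{1}{p-1}},$$
where $\kappa_h$ depends only on $w_h$ and $\lambda_{i_h}$. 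Summing over $h$ and invoking the Nehari identity for $\cJ$ yields the stated bound with $d_1=(\tfrac12-\tfrac1{2p})\sum_h\kappa_h$, manifestly independent of $(\beta_{ij})$.

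The only subtle point is securing the $(\beta_{ij})$-independent bound in (ii); the disjoint-supports device is exactly what decouples the reduced Nehari system, and the choice of $i_h$ maximizing the diagonal entry inside each block is what allows the resulting factor to be written in terms of $\min_k\max_{i\in I_k}\beta_{ii}$. Had the profiles $w_h$ not been chosen with mutually disjoint supports, the reduced system would couple the $t_h$'s through the signed off-diagonal entries $\beta_{ij}$, $(i,j)\in\cK_h$, and extracting a uniform bound in terms of the diagonal entries alone would be substantially harder.
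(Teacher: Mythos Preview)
Your argument is essentially the paper's own: part~(i) is identical, and for part~(ii) both you and the paper place one nonzero component per block at the index $i_h$ maximizing $\beta_{ii}$ in $I_h$, supported on pairwise disjoint sets so that the Nehari constraints decouple. One small point to tighten: you write that $\kappa_h$ depends only on $w_h$ and $\lambda_{i_h}$ and is therefore ``manifestly independent of $(\beta_{ij})$'', but $i_h$ itself is chosen via $(\beta_{ij})$; simply replace $\|w_h\|_{i_h}$ by $\max_{i\in I_h}\|w_h\|_i$ (the norms $\|\cdot\|_i$ are all equivalent by $(A_1)$) to make the bound genuinely uniform --- the paper's definition of $d_1$ has the same cosmetic issue.
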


\begin{proof}
$(i):$ If $(\bar u_1,\ldots,\bar u_q)\in\cN$, the Hölder and the Sobolev inequalities yield
$$\|\bar u_h\|^2\leq\sum_{(i,j)\in\cI_h}\io\beta_{ij}|u_i|^p|u_j|^p\leq C\|\bar u_h\|^{2p}\quad\text{for every \ }h=1,\ldots,q.$$
Hence, there exists $d_0>0$ such that $\|\bar u_h\|^2\geq d_0$ for every $h=1,\ldots,q$.

$(ii):$ Fix $i_h\in I_h$ such that $\beta_{i_hi_h}=\max\{\beta_{ii}:i\in I_h\}$. Let
$$\cM:=\{(v_1,\ldots,v_q)\in H^q:v_h\neq 0, \ \|v_h\|_{i_h}^2=\io |v_h|^{2p}, \ v_hv_k=0\text{ if }h\neq k\}$$
and define
$$d_1:=(\tfrac{1}{2}-\tfrac{1}{2p})\inf_{(v_1,\ldots,v_q)\in\cM}\sum_{h=1}^q\|v_h\|_{i_h}^2.$$
Given $(v_1,\ldots,v_q)\in\cM$, let $\bar u_h\in\cH_h$ be the function whose $i_h$-th component is $\beta_{i_hi_h}^{-\frac{1}{2p-2}}v_h$ and all other components are $0$. Then, $\bf u=(\bar u_1,\ldots,\bar u_q)\in\cN$ and
\begin{align*}
\inf_{\bf u\in\cN}\cJ(\bf u)&\leq\cJ(\bf u)=(\tfrac{1}{2}-\tfrac{1}{2p})\sum_{h=1}^q\beta_{i_hi_h}^{-\frac{1}{p-1}}\|v_h\|_{i_h}^2\\
&\leq (\tfrac{1}{2}-\tfrac{1}{2p})(\min_h\beta_{i_hi_h})^{-\frac{1}{p-1}}\sum_{h=1}^q\|v_h\|_{i_h}^2,
\end{align*}
and the inequality in $(ii)$ follows.
\end{proof}

Given $\bf u=(\bar u_1,\ldots,\bar u_q)\in\cH$ and $\bf s=(s_1,\ldots,s_q)\in(0,\infty)^q$, we write
$$\bf s\bf u:=(s_1\bar u_1,\ldots,s_q\bar u_q).$$
Let $\cS_h:=\{\bar u\in\cH_h:\|\bar u\|=1\}$ and $\cT:=\cS_1\times\cdots\times\cS_q$. Define 
$$\cU:=\{\bf u\in\cT:\bf s\bf u\in\cN\text{ \ for some \ }\bf s\in(0,\infty)^q\}.$$
Arguing as in \cite[Proposition 3.1 and Theorem 3.3]{cs} one obtains the following two lemmas. We include their proof for the sake of completeness.

\begin{lemma} \label{lem:homeomorphism}
\begin{itemize}
\item[$(i)$] Let $\bf u\in\cT$. If there exists $\bf s_{\bf u}\in(0,\infty)^q$ such that $\bf s_{\bf u}\bf u\in\cN$, then $\bf s_{\bf u}$ is unique and satisfies
$$\cJ(\bf s_{\bf u}\bf u)=\max_{\bf s\in(0,\infty)^q}\cJ(\bf s\bf u).$$
\item[$(ii)$] $\cU$ is an nonempty open subset of $\cT$, and the map \ $\cU\to(0,\infty)^q$ \ given by $\bf u\mapsto\bf s_{\bf u}$ is continuous.
\item[$(iii)$] The map \ $\cU\to \cN$ \ given by \ $\bf u\mapsto\bf s_{\bf u}\bf u$ \ is a homeomorphism.
\item[$(iv)$] If $(\bf u_n)$ is a sequence in $\cU$ and $\bf u_n\to\bf u\in\partial\cU$, then $|\bf s_{\bf u_n}|\to\infty$.
\end{itemize}
\end{lemma}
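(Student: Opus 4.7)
For a fixed $\bf u=(\bar u_1,\ldots,\bar u_q)\in\cT$, introduce the auxiliary function $\vp_{\bf u}:(0,\infty)^q\to\r$, $\vp_{\bf u}(\bf s):=\cJ(\bf s\bf u)$. Setting
\begin{equation*}
a_h(\bf u):=\sum_{(i,j)\in\cI_h}\beta_{ij}\io|u_i|^p|u_j|^p\ge0,\qquad B_{hk}(\bf u):=\sum_{(i,j)\in I_h\times I_k}\beta_{ij}\io|u_i|^p|u_j|^p\le0
\end{equation*}
(signs by $(B_1)$), one rewrites
\begin{equation*}
\vp_{\bf u}(\bf s)=\sum_{h=1}^q\frac{s_h^2}{2}\|\bar u_h\|^2-\frac{1}{2p}\sum_{h=1}^q s_h^{2p}a_h(\bf u)-\frac{1}{2p}\sum_{h\ne k}s_h^p s_k^p B_{hk}(\bf u).
\end{equation*}
Since $\beta_{ii}>0$ and $\|\bar u_h\|=1$, we have $a_h(\bf u)>0$, and the condition $\bf s\bf u\in\cN$ is equivalent to $\nabla\vp_{\bf u}(\bf s)=\bf 0$. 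The whole lemma is then a statement about the critical set of $\vp_{\bf u}$.

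\emph{Part $(i)$.} Taking the inner product of $\nabla\vp_{\bf u}(\bf s_*)=\bf 0$ with $\bf s_*$ yields
$$A:=\sum_h s_{*,h}^2\|\bar u_h\|^2=\sum_h s_{*,h}^{2p}a_h+\sum_{h\ne k}s_{*,h}^p s_{*,k}^p B_{hk}>0,$$
so that $\vp_{\bf u}(t\bf s_*)=\tfrac{t^2}{2}A-\tfrac{t^{2p}}{2p}A$ attains its unique strict maximum at $t=1$ on each ray. To obtain uniqueness \emph{across} rays I would compute the Hessian and, feeding the critical point equations back in, check that
\begin{equation*}
\partial_h^2\vp_{\bf u}(\bf s_*)=-2(p-1)s_{*,h}^{2p-2}a_h+(2-p)s_{*,h}^{p-2}\!\!\sum_{k\ne h}\!s_{*,k}^p B_{hk},\quad \partial_h\partial_k\vp_{\bf u}(\bf s_*)=-p\,s_{*,h}^{p-1}s_{*,k}^{p-1}B_{hk}\ (h\ne k),
\end{equation*}
so that $-\nabla^2\vp_{\bf u}(\bf s_*)$ is a symmetric $Z$-matrix (strictly positive diagonal, nonpositive off-diagonal). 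The key computation is that evaluating this matrix on the positive vector $\bf s_*$ and using the critical point equations once more gives, after cancellations,
$$\bigl(-\nabla^2\vp_{\bf u}(\bf s_*)\,\bf s_*\bigr)_h=2(p-1)s_{*,h}\|\bar u_h\|^2>0,\qquad h=1,\ldots,q,$$
so $-\nabla^2\vp_{\bf u}(\bf s_*)$ is a nonsingular $M$-matrix and hence positive definite. Every critical point is thus a strict local maximum of $\vp_{\bf u}$. Combined with the coercivity from above ($\vp_{\bf u}(\bf s)\to-\infty$ whenever $s_h\to\infty$ for some $h$, thanks to $a_h>0$) and the continuous extension of $\vp_{\bf u}$ to $[0,\infty)^q$, the standard minimax argument of \cite[Proposition 3.1]{cs} excludes two distinct strict maxima on $(0,\infty)^q$, yielding uniqueness of $\bf s_{\bf u}$ and its identification with the global maximizer.

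\emph{Parts $(ii)$ and $(iii)$.} The nondegeneracy of $\nabla^2\vp_{\bf u}(\bf s_{\bf u})$ obtained in $(i)$ lets me apply the implicit function theorem to the smooth map $(\bf s,\bf v)\mapsto\nabla_{\bf s}\cJ(\bf s\bf v)$ at $(\bf s_{\bf u},\bf u)$: for every $\bf v\in\cT$ close to $\bf u$ there is a unique $\bf s(\bf v)$ close to $\bf s_{\bf u}$ depending continuously on $\bf v$, and by uniqueness from $(i)$ this $\bf s(\bf v)$ equals $\bf s_{\bf v}$. Hence $\cU$ is open and $\bf u\mapsto\bf s_{\bf u}$ is continuous on $\cU$. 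The map $\bf u\mapsto\bf s_{\bf u}\bf u$ from $\cU$ onto $\cN$ has the explicit continuous inverse $(\bar v_1,\ldots,\bar v_q)\mapsto(\bar v_1/\|\bar v_1\|,\ldots,\bar v_q/\|\bar v_q\|)$ (whose image lies in $\cU$ by applying $(i)$ to the normalized tuple), and is therefore a homeomorphism.

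\emph{Part $(iv)$.} Suppose $\bf u_n\to\bf u\in\partial\cU$ and, for contradiction, $\{|\bf s_{\bf u_n}|\}$ is bounded. Applying Lemma \ref{lem:nehari}$(i)$ to $\bf s_{\bf u_n}\bf u_n\in\cN$ gives $s_{\bf u_n,h}^2=\|s_{\bf u_n,h}\bar u_{n,h}\|^2\ge d_0$ for each $h$, so along a subsequence $\bf s_{\bf u_n}\to\bf s_*$ with $s_{*,h}\ge\sqrt{d_0}>0$. Continuity of $\cJ$ then forces $\bf s_*\bf u\in\cN$, i.e.\ $\bf u\in\cU$, contradicting $\bf u\in\partial\cU$ since $\cU$ is open. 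The main obstacle throughout is the uniqueness step in $(i)$; once the $M$-matrix structure of $-\nabla^2\vp_{\bf u}$ is extracted, the remaining parts reduce to continuity and the implicit function theorem.
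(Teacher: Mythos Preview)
Your approach is essentially the paper's: both reduce to studying $\vp_{\bf u}(\bf s)=\cJ(\bf s\bf u)$ and defer to results in \cite{cs}. Your $M$-matrix computation in $(i)$ is correct (the identity $(-\nabla^2\vp_{\bf u}(\bf s_*)\,\bf s_*)_h=2(p-1)s_{*,h}$ checks out) and makes explicit what the paper simply quotes as \cite[Lemma 2.2]{cs}; likewise, your use of the implicit function theorem in $(ii)$ unwinds what the paper cites as \cite[Lemma 2.3]{cs}. Parts $(iii)$ and $(iv)$ are handled the same way as in the paper, and your explicit invocation of the lower bound $s_{\bf u_n,h}^2\ge d_0$ in $(iv)$ is a clean way to guarantee the subsequential limit stays in $(0,\infty)^q$.

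There is one genuine omission: you never prove that $\cU\neq\emptyset$, which is part of statement $(ii)$. This is not a formality and does not follow from the rest of your argument, since for a generic $\bf u\in\cT$ the function $\vp_{\bf u}$ need not have any critical point in $(0,\infty)^q$ (the repulsive terms $-B_{hk}\ge0$ may be too large). The paper handles this by choosing $v_1,\ldots,v_q\in H$ with \emph{pairwise disjoint supports}, placing $v_h$ in a single slot of the $h$-th block, and normalizing. Disjoint supports force every cross-block term $B_{hk}(\bf u)$ to vanish, so the system $\nabla\vp_{\bf u}(\bf s)=0$ decouples into $q$ scalar equations $s_h^{2p-2}a_h(\bf u)=1$, each of which has a solution since $a_h(\bf u)>0$. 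You should add this construction.
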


\begin{proof}
Given $\bf u\in\cH$ we define $J_{\bf u}:(0,\infty)^q \to\r$ by $J_{\bf u}(\bf s):=\cJ(\bf s\bf u)$. Then
$$s_h\,\partial_h J_{\bf u}(\bf s)=\partial_{\bar u_h}\cJ(\bf s\bf u)[s_h \bar u_h],\qquad h=1,\ldots,q.$$
So, if $\|\bar u_h\|\neq 0$ for every $h=1,\ldots,q$, then $\bf s\bf u\in\cN$ iff $\bf s$ is a critical point of $J_{\bf u}$. The function $J_{\bf u}$ can be written as
$$J_{\bf u}(\bf s)= \sum_{h=1}^q a_{\bf u,h}s_h^2 - \sum_{h=1}^q b_{\bf u,h}s_h^{2p} + \sum_{\substack{k=1 \\ k\neq h}}^qd_{\bf u,hk}s_k^ps_h^p,$$
with \ $a_{\bf u,h}:=\frac{1}{2}\|\bar u_h\|^2$, \ $b_{\bf u,h}:=\frac{1}{2p}\sum_{(i,j)\in I_h\times I_h}\beta_{ij}\io |u_i|^p|u_j|^p$ \ and \ $d_{\bf u,hk}:=-\frac{1}{2p}\sum_{(i,j)\in I_h\times I_k}\beta_{ij}\io |u_i|^p|u_j|^p$.
\smallskip

$(i):$ If $\bf u\in\cT$, then $a_{\bf u,h}>0$. Assumption $(B_1)$ implies that $b_{\bf u,h}>0$ and $d_{\bf u,hk}\geq 0$. By \cite[Lemma 2.2]{cs}, if $J_{\bf u}$ has a critical point $\bf s_{\bf u}\in(0,\infty)^\ell$, then it is unique and it is a global maximum of $J_{\bf u}$ in $(0,\infty)^q$.

$(ii):$ Let  $v_1,\ldots,v_q\in H$ be such that $\|v_h\|_{\ell_h}=1$ and $v_h$ and $v_k$ have disjoint supports if $h\neq k$, and let $s_h:=(\beta_{\ell_h,\ell_h}\io|v_h|^{2p})^{-1/(2p-2)}$. Set $\bar u_h:=(0,\ldots,0,v_h)$, $\bf u:=(\bar u_1,\ldots,\bar u_q)$ and $\bf s:=(s_1,\ldots,s_q)$. Then, $\bf u\in\cT$ and $\bf s\bf u\in\cN$. Hence, $\cU\neq\emptyset$.

As $a_{\bf u,h},b_{\bf u,h},d_{\bf u,hk}$ are continuous functions of $\bf u$, \cite[Lemma 2.3]{cs} implies that $\cU$ is open and that the map \ $\cU\to(0,\infty)^q$ \ given by \ $\bf u\mapsto\bf s_{\bf u}$ \ is continuous.

$(iii):$ It follows from $(ii)$ that the map $\cU\to \cN$ given by $\bf u\mapsto\bf s_{\bf u}\bf u$ is  continuous. Its inverse is
$$(\bar u_1,\ldots,\bar u_q) \mapsto\left(\frac {\bar u_1}{\|\bar u_1\|},\ldots,\frac {\bar u_q}{\|\bar u_q\|}\right),$$
which is well defined and continuous. 

$(iv):$ Let $(\bf u_n)$ be a sequence in $\cU$ such that $\bf u_n\to\bf u\in\partial\cU$. If the sequence $(\bf s_{\bf u_n})$ were bounded, after passing to a subsequence we would have $\bf s_{\bf u_n}\to \bf s$. Since $\cN$ is closed, this would imply that $\bf s\bf u\in\cN$ and, hence, that $\bf u\in\cU$. This is impossible because $\cU$ is open in $\cT$. 
\end{proof}

Define $\Psi:\cU\to\r$ by 
\begin{equation} \label{eq:psi}
\Psi(\bf u): = \cJ(\bf s_{\bf u}\bf u)=(\tfrac{1}{2}-\tfrac{1}{2p})|\bf s_{\bf u}|^2.
\end{equation}
As $\cU$ is an open subset of the smooth Hilbert submanifold $\cT$ of $\cH$, we may ask whether $\Psi$ is differentiable. As we shall see below, it is in fact $\cC^1$. We write $\|\Psi'(\bf u)\|_*$ for the the norm of $\Psi'(u)$ in the cotangent space $\mathrm{T}_u^*(\cT)$ to $\cT$ at $u$, i.e.,
$$\|\Psi'(\bf u)\|_*:=\sup\limits_{\substack{\bf v\in\mathrm{T}_{\bf u}(\cU) \\\bf v\neq 0}}\frac{|\Psi'(\bf u)\bf v|}{\|\bf v\|},$$
where $\mathrm{T}_{\bf u}(\cU)$ is the tangent space to $\cU$ at $\bf u$.

Recall that a sequence $(\bf u_n)$ in $\cU$ is called a $(PS)_c$\emph{-sequence for} $\Psi$ if $\Psi(\bf u_n)\to c$ and $\|\Psi'(\bf u_n)\|_*\to 0$, and $\Psi$ is said to satisfy the $(PS)_c$\emph{-condition} if every such sequence has a convergent subsequence. Similarly, a $(PS)_c$\emph{-sequence for} $\cJ$ is a sequence $(\bf u_n)$ in $\cH$ such that $\cJ(\bf u_n)\to 0$ and $\|\cJ'(\bf u_n)\|_{\cH^{-1}}\to 0$, and $\cJ$ satisfies the $(PS)_c$\emph{-condition} if any such sequence has a convergent subsequence.
 
\begin{lemma} \label{lem:psi}
\begin{itemize}
\item[$(i)$] $\Psi\in\cC^1(\cU,\r)$,
\begin{equation*}
\Psi'(\bf u)\bf v = \cJ'(\bf s_{\bf u}\bf u)[\bf s_{\bf u}\bf v] \quad \text{for all } \bf u\in\cU \text{ and }\bf v\in \mathrm{T}_{\bf u}(\cU),
\end{equation*}
and there exists $d_0>0$ such that
$$d_0\,\|\cJ'(\bf s_{\bf u}\bf u)\|_{\cH^{-1}}\leq\|\Psi'(\bf u)\|_*\leq |\bf s_{\bf u}|_\infty\|\cJ'(\bf s_{\bf u}\bf u)\|_{\cH^{-1}}\quad \text{for all } \bf u\in\cU,$$
where $|\bf s|_\infty=\max\{|s_1|,\ldots,|s_q|\}$ if $\bf s=(s_1,\ldots,s_q)$.
\item[$(ii)$] If $(\bf u_n)$ is a $(PS)_c$-sequence for $\Psi$, then $(\bf s_{\bf u_n}\bf u_n)$ is a $(PS)_c$-sequence for $\cJ$. 
\item[$(iii)$] $\bf u$ is a critical point of $\Psi$ if and only if $\bf s_{\bf u}\bf u$ is a critical point of $\cJ$.
\item[$(iv)$] If $(\bf u_n)$ is a sequence in $\cU$ and $\bf u_n\to\bf u\in\partial\cU$, then $\|\Psi(\bf u_n)\|\to\infty$.
\end{itemize}
\end{lemma}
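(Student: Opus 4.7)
The plan is to first upgrade the continuous map $\bf u\mapsto\bf s_{\bf u}$ from Lemma \ref{lem:homeomorphism}(ii) to a map of class $\cC^1$. For this I will apply the implicit function theorem to $G(\bf u,\bf s):=\nabla_\bf s J_{\bf u}(\bf s)$: as in the proof of Lemma \ref{lem:homeomorphism}(i) and \cite[Lemma 2.2]{cs}, the point $\bf s_{\bf u}$ is a strict global maximum of $J_{\bf u}$ on $(0,\infty)^q$, and the Hessian $D_\bf s G(\bf u,\bf s_{\bf u})$ is negative definite, hence invertible. Consequently $\bf u\mapsto\bf s_{\bf u}$ is of class $\cC^1$, $\pi(\bf u):=\bf s_{\bf u}\bf u\in\cC^1(\cU,\cH)$, and $\Psi=\cJ\circ\pi\in\cC^1(\cU,\r)$.

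The formula in $(i)$ then follows from the chain rule. Block-wise $(\pi'(\bf u)\bf v)_h=s_{\bf u,h}\bar v_h+\alpha_h\bar u_h$ with $\alpha_h:=(D_\bf u s_{\bf u,h})\bf v$, so
$$\Psi'(\bf u)\bf v=\cJ'(\bf s_{\bf u}\bf u)[\bf s_{\bf u}\bf v]+\sum_{h=1}^q\alpha_h\,\partial_{\bar u_h}\cJ(\bf s_{\bf u}\bf u)\bar u_h.$$
Because $\bf s_{\bf u}\bf u\in\cN$ and $s_{\bf u,h}>0$, each factor $\partial_{\bar u_h}\cJ(\bf s_{\bf u}\bf u)\bar u_h$ vanishes, leaving the stated identity. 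The upper norm bound is immediate from $\|\bf s_{\bf u}\bf v\|\le|\bf s_{\bf u}|_\infty\|\bf v\|$. For the lower one, given $\bf w\in\cH$ I will decompose each block as $\bar w_h=\bar w_h^\perp+t_h\bar u_h$ with $\bar w_h^\perp\in\mathrm{T}_{\bar u_h}(\cS_h)$. The same vanishing identity gives $\cJ'(\bf s_{\bf u}\bf u)\bf w=\Psi'(\bf u)\bf v$ for $\bf v:=(\bar w_h^\perp/s_{\bf u,h})_{h=1}^q\in\mathrm{T}_{\bf u}(\cT)$, while Lemma \ref{lem:nehari}(i) supplies $s_{\bf u,h}^2\ge d_0>0$, hence $\|\bf v\|\le d_0^{-1/2}\|\bf w\|$, which yields the lower inequality after renaming the constant.

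Parts $(ii)$--$(iv)$ fall out for free. For $(ii)$, a $(PS)_c$-sequence $(\bf u_n)$ for $\Psi$ satisfies $\cJ(\bf s_{\bf u_n}\bf u_n)=\Psi(\bf u_n)\to c$ by \eqref{eq:psi}, while the lower norm bound in $(i)$ forces $\|\cJ'(\bf s_{\bf u_n}\bf u_n)\|_{\cH^{-1}}\to 0$. Part $(iii)$ is an immediate consequence of the two-sided bound. For $(iv)$, Lemma \ref{lem:homeomorphism}(iv) gives $|\bf s_{\bf u_n}|\to\infty$, and then \eqref{eq:psi} yields $\Psi(\bf u_n)=(\tfrac{1}{2}-\tfrac{1}{2p})|\bf s_{\bf u_n}|^2\to\infty$. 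The main technical point will be the IFT step, i.e.\ checking nondegeneracy of $D_\bf s G(\bf u,\bf s_{\bf u})$; this uses the structural conditions in $(B_1)$ (positivity of $b_{\bf u,h}$ and nonnegativity of $d_{\bf u,hk}$, as in the proof of Lemma \ref{lem:homeomorphism}) exactly as in \cite[Lemma 2.2]{cs}.
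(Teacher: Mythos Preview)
Your argument is correct, but the route you take for part $(i)$ differs from the paper's. You upgrade $\bf u\mapsto\bf s_{\bf u}$ to $\cC^1$ via the implicit function theorem (which requires the Hessian of $J_{\bf u}$ at $\bf s_{\bf u}$ to be nondegenerate) and then apply the chain rule to $\Psi=\cJ\circ\pi$. The paper instead works only with the \emph{continuity} of $\bf s_{\bf u}$ already established in Lemma~\ref{lem:homeomorphism}(ii): given a curve $\gamma$ with $\gamma(0)=\bf u$, $\gamma'(0)=\bf v$, it sandwiches the difference quotient of $\Psi\circ\gamma$ using the maximality $\cJ(\bf s_{\bf u}\bf u)=\max_{\bf s}\cJ(\bf s\bf u)$ together with the mean value theorem, obtaining
\[
t\,\cJ'(\bf s_{\bf u}\gamma(\tau_2 t))[\bf s_{\bf u}\gamma'(\tau_2 t)]\le \Psi(\gamma(t))-\Psi(\bf u)\le t\,\cJ'(\bf s_{\gamma(t)}\gamma(\tau_1 t))[\bf s_{\gamma(t)}\gamma'(\tau_1 t)],
\]
and then passing to the limit. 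This entirely bypasses the IFT and the Hessian check. Your approach is more systematic and yields $\cC^1$-regularity of $\bf s_{\bf u}$ as a byproduct; the paper's squeeze argument is lighter on hypotheses and would survive even in settings where nondegeneracy of the Hessian is not available. For the norm inequalities and for $(ii)$--$(iv)$, the two arguments are essentially the same.
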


\begin{proof}
$(i):$ Let $\bf u\in\cU$, $\bf v\in\mathrm{T}_{\bf u}(\cU)$, and $\gamma:(-\eps,\eps)\to\cU$ be smooth and such that $\gamma(0)=\bf u$ and $\gamma'(0)=\bf v$. Fix $t\in(-\eps,\eps)$. Recalling that $\cJ(\bf s_{\bf u}\bf u)=\max_{\bf s\in(0,\infty)^q}\cJ(\bf s\bf u)$ and applying the mean value theorem to the function $\tau\mapsto\cJ(\bf s_{\gamma(t)}\gamma'(\tau t))$ we obtain
\begin{align*}
\Psi(\gamma(t))-\Psi(u)& = \cJ(\bf s_{\gamma(t)}\gamma(t)) - \cJ(\bf s_{\bf u}\bf u) \le  \cJ(\bf s_{\gamma(t)}\gamma(t)) - \cJ(\bf s_{\gamma(t)}\bf u)\\
& =t\,\cJ'(\bf s_{\gamma(t)}\gamma(\tau_1t))\,[\bf s_{\gamma(t)}\gamma'(\tau_1t)]
\end{align*}
for some $\tau_1\in(0,1)$. Similarly,
$$\Psi(\gamma(t))-\Psi(u)\geq t\,\cJ'(\bf s_{\bf u}\gamma(\tau_2t))\,[\bf s_{\bf u}\gamma'(\tau_2t)]$$
for some $\tau_2\in(0,1)$. Therefore,
$$\Psi'(\bf u)\bf v=\lim_{t\to 0} \frac{\Psi(\gamma(t))-\Psi(u)}{t} =  \cJ'(\bf s_{\bf u}\bf u)[\bf s_{\bf u}\bf v].$$
It follows that $\Psi$ is of class $\cC^1$ in $\cU$. 

As $\mathrm{T}_{\bf u}(\cU)=\{(\bar v_1,\ldots,\bar v_q)\in\cH:\langle\bar u_h,\bar v_h\rangle=0\text{ for each }h=1,\ldots,q\}$, we have that $\cH = \mathrm{T}_{\bf u}(\cU) \oplus\{(t_1\bar u_1,\ldots,t_q\bar u_q):t_i\in\r\}$ for every $\bf u=(\bar u_1,\ldots,\bar u_q)\in\cU$. Since $\bf s_{\bf u}\bf u\in\mathcal{N}$ and $s_h>0$, we conclude that
$$\sup_{\substack{\bf v\in \mathrm{T}_{\bf u}(\cU) \\ \bf v\neq 0}}\frac{\cJ'(\bf s_{\bf u}\bf u)[\bf s_{\bf u}\bf v]}{\|\bf s_{\bf u}\bf v\|}=\sup_{\substack{\bf w\in\cH \\ \bf w\neq 0}}\frac{\cJ'(\bf s_{\bf u}\bf u)[\bf w]}{\|\bf w\|}.$$
On the other hand, for every $\bf v\in\mathrm{T}_{\bf u}(\cU)$, $\bf v\neq \bf 0$, we have that
$$\min_hs_{\bf u,h}\,\frac{\cJ'(\bf s_{\bf u}\bf u)[\bf s_{\bf u}\bf v]}{\|\bf s_{\bf u}\bf v\|}\leq\frac{|\Psi'(\bf u)\bf v|}{\|\bf v\|}=\frac{\cJ'(\bf s_{\bf u}\bf u)[\bf s_{\bf u}\bf v]}{\|\bf v\|}\leq\max_hs_{\bf u,h}\,\frac{\cJ'(\bf s_{\bf u}\bf u)[\bf s_{\bf u}\bf v]}{\|\bf s_{\bf u}\bf v\|},$$
where $\bf s_{\bf u}=(s_{\bf u,1},\ldots,s_{\bf u,q})$. By Lemma \ref{lem:nehari}, $\min_{h=1,\ldots,q}s_{\bf u,h}\geq d_0$ for every $\bf u\in\cU$.
Taking the supremum over all $\bf v\in\mathrm{T}_{\bf u}(\cU)$, $\bf v\neq \bf 0$, we obtain the inequalities stated in $(i)$.

Statements $(ii)$ and $(iii)$ follow immediately from $(i)$, and statement $(iv)$ follows from Lemma \ref{lem:homeomorphism}$(iv)$ and \eqref{eq:psi}.
\end{proof}

\begin{theorem} \label{thm:block}
Assume $(A_1)$ and $(B_1)$. If $\cJ$ satisfies the $(PS)_{c_0}$-condition at $c_0:=\inf_\cN \cJ$, then the system \eqref{eq:system} has a least energy block-wise nontrivial solution.
\end{theorem}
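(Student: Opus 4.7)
The plan is to find a minimizer of the $\cC^1$ functional $\Psi$ on the open manifold $\cU$ by Ekeland's variational principle, and then to push it through the homeomorphism of Lemma \ref{lem:homeomorphism}(iii) to a critical point of $\cJ$ lying in $\cN$. The Nehari-type structure developed in the previous three lemmas makes this possible: the infimum of $\cJ$ over $\cN$ equals $\inf_{\cU}\Psi$, and Lemma \ref{lem:psi}(iv) prevents minimizing sequences from escaping to $\partial\cU$.

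Concretely, I would proceed as follows. By Lemma \ref{lem:homeomorphism}(iii) and the formula $\Psi(\bf u)=\cJ(\bf s_{\bf u}\bf u)$, one has $\inf_{\cU}\Psi=\inf_{\cN}\cJ=c_0$, which is strictly positive by Lemma \ref{lem:nehari}(i). Extend $\Psi$ to the whole manifold $\cT=\cS_1\times\cdots\times\cS_q$ by setting $\Psi\equiv+\infty$ on $\partial\cU$; Lemma \ref{lem:psi}(iv) makes this extension lower semi-continuous on the complete metric space $\cT$ (a product of unit spheres in the Hilbert factors $\cH_h$). Ekeland's variational principle applied on $\cT$ then produces a minimizing sequence $(\bf u_n)$ for the extended functional; since $\Psi(\bf u_n)\to c_0<\infty$, necessarily $\bf u_n\in\cU$ for $n$ large, and the Ekeland inequality translates into $\|\Psi'(\bf u_n)\|_*\to 0$, so that $(\bf u_n)$ is a $(PS)_{c_0}$-sequence for $\Psi$. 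By Lemma \ref{lem:psi}(ii), $\bf w_n:=\bf s_{\bf u_n}\bf u_n$ is then a $(PS)_{c_0}$-sequence for $\cJ$ in $\cH$. Invoking the assumed $(PS)_{c_0}$-condition, up to a subsequence $\bf w_n\to\bf w$ in $\cH$, and the $\cC^1$-continuity of $\cJ'$ gives $\cJ(\bf w)=c_0$ and $\cJ'(\bf w)=0$; moreover $\bf w\in\cN$ because $\cN$ is closed in $\cH$ (Lemma \ref{lem:nehari}(i)). Therefore $\bf w$ is a least energy block-wise nontrivial solution to \eqref{eq:system} in the sense of Definition \ref{def:least energy}.

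The main delicate point is the passage from the tangential smallness $\|\Psi'(\bf u_n)\|_*\to 0$ on $\cT$ to the full smallness $\|\cJ'(\bf w_n)\|_{\cH^{-1}}\to 0$ on $\cH$: a priori an Ekeland sequence only controls directional derivatives along $\mathrm{T}_{\bf u_n}(\cT)$. This is exactly where the two-sided bound of Lemma \ref{lem:psi}(i) is essential, since together with the uniform lower estimate $s_{\bf u_n,h}\geq d_0>0$ supplied by Lemma \ref{lem:nehari}(i) it upgrades tangential smallness into full smallness with no loss. Once this technical bridge is in place, the remaining verifications, namely lower semi-continuity of the extended $\Psi$ on $\cT$ and the non-escape of minimizing sequences to $\partial\cU$, are immediate from Lemma \ref{lem:psi}(iv).
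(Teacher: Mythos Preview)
Your proposal is correct and follows essentially the same strategy as the paper: obtain a $(PS)_{c_0}$-sequence for $\Psi$ on $\cU$ via Ekeland's principle, transfer it to a $(PS)_{c_0}$-sequence for $\cJ$ through Lemma~\ref{lem:psi}(ii), and conclude using the assumed Palais--Smale condition together with the closedness of $\cN$. The only technical difference is in how Ekeland is justified on the open set $\cU$: you extend $\Psi$ by $+\infty$ to the complete space $\cT$ (note the extension should be on all of $\cT\smallsetminus\cU$, not merely $\partial\cU$), whereas the paper argues that $\cU$ is positively invariant under the negative pseudogradient flow of $\Psi$ and then invokes the deformation lemma; both devices rest on Lemma~\ref{lem:psi}(iv).
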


\begin{proof}
Let $(\bf w_n)$ be a minimizing sequence for $\Psi$ in $\cU$. Lemma \ref{lem:psi}$(iv)$ implies that $\cU$ is positively invariant under the negative pseudogradient flow of $\Psi$, so the deformation lemma \cite[Lemma 5.15]{w} and Ekeland's variational principle \cite[Lemma 8.5]{w} hold true and we may assume that $(\bf w_n)$ is a $(PS)_{c_0}$-sequence for $\Psi$. As $\cJ$ satisfies the $(PS)_{c_0}$-condition, we derive from Lemmas \ref{lem:psi}$(ii)$ and \ref{lem:homeomorphism}$(iii)$ that, passing to a subsequence, $\bf w_n\to\bf w$ and $\bf w$ is a minimum of $\Psi$. Then $\bf u:=\bf s_{\bf w}\bf w$ is a minimum of $\cJ$ on $\cN$ and Lemma \ref{lem:psi}$(iii)$ asserts that it is a critical point of $\cJ$.
\end{proof}
\smallskip

\begin{proof}[Proof of Theorem \ref{thm:main_block}]
A standard argument shows that, if assumption $(A_2)$ holds true, then $\cJ$ satisfies the $(PS)_{c_0}$-condition. So this result follows from Theorem \ref{thm:block}.
\end{proof}

\section{Existence of a fully nontrivial solution} \label{sec:fully nontrivial}

We assume thoughout $(A_1)$ and $(B_1)$. Our aim is to show that, if $(B_2)$ holds true, no semitrivial function in $\cN$ can be a minimizer of $\cJ$ on $\cN$. 

Let $\bf u\in\cN$ be semitrivial. To simplify notation we assume that
\begin{equation} \label{eq:u}
\bf u=(0,u_2,\ldots,u_\ell)=(\bar u_1,\ldots,\bar u_q)\qquad\text{with \ }u_i\in H_i,\quad\bar u_h\in\cH_q.
\end{equation}
Given $\vp\in H$ and $\eps>0$ define
$$\bf u_\eps:=(\bar u_{\eps,1},\bar u_2,\ldots,\bar u_q)\quad\text{with \ }\bar u_{\eps,1}:=(\eps\vp,u_2,\ldots,u_{\ell_1})\in\cH_1.$$
Set \ $\hat I_1:=I_1\smallsetminus\{1\}$ \ and \ $\hat I_h:=I_h$ if $h=2,\ldots,q$.

\begin{lemma} \label{lem:fully_nontrivial1}
There exist $\eps_0>0$ and a $\cC^1$-map $\bf t:(-\eps_0,\eps_0)\to(0,\infty)^q$ satisfying \ $\bf t(\eps)\bf u_\eps\in\cN$, $\bf t(0)=(1,\dots,1)$ and $\bf t'(0)= (0,\dots,0)$.
\end{lemma}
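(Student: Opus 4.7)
The plan is to invoke the implicit function theorem. I would define $F:(-\delta,\delta)\times(0,\infty)^q\to\r^q$ by
$$F_h(\eps,\bf t):=\partial_{t_h}\cJ(\bf t\bf u_\eps),\qquad h=1,\ldots,q.$$
Because $p>1$, the $\eps$-dependence of $\cJ(\bf t\bf u_\eps)$ enters only through the factors $\eps^2$, $|\eps|^p$ and $|\eps|^{2p}$, all of which are of class $\cC^1$ at $\eps=0$; hence $F\in\cC^1((-\delta,\delta)\times(0,\infty)^q,\r^q)$ for $\delta>0$ small. Since $\bar u_{\eps,h}\ne 0$ for small $\eps$ and every $h$, the equation $F(\eps,\bf t)=\bf 0$ is equivalent to $\bf t\bf u_\eps\in\cN$, and $F(0,\bf 1)=\bf 0$ because $\bf u_0=\bf u\in\cN$.

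The main step will be to verify that the Jacobian $\partial_{\bf t}F(0,\bf 1)$ is invertible. This matrix is the Hessian at $\bf t=\bf 1$ of the fiber map $J_{\bf u}(\bf t)=\cJ(\bf t\bf u)$ introduced in the proof of Lemma \ref{lem:homeomorphism}. Using the coefficients $a_{\bf u,h}, b_{\bf u,h}, d_{\bf u,hk}$ from that proof together with the Nehari identity $a_{\bf u,h}=p(b_{\bf u,h}-\sum_{k\ne h}d_{\bf u,hk})$, a direct differentiation gives
$$[\partial_{\bf t}F(0,\bf 1)]_{hh}=-4p(p-1)b_{\bf u,h}+2p(p-2)\sum_{k\ne h}d_{\bf u,hk},\quad [\partial_{\bf t}F(0,\bf 1)]_{hk}=2p^{2}d_{\bf u,hk}\ (h\ne k).$$
The hard part will be the negative-definiteness estimate; my plan is to use the AM-GM bound $2d_{\bf u,hk}\xi_h\xi_k\le d_{\bf u,hk}(\xi_h^2+\xi_k^2)$ (valid because $d_{\bf u,hk}\ge 0$ by $(B_1)$) together with the Nehari identity once more to collapse the associated quadratic form to
$$\sum_{h,k=1}^{q}[\partial_{\bf t}F(0,\bf 1)]_{hk}\xi_h\xi_k\ \le\ -4(p-1)\sum_{h=1}^{q}a_{\bf u,h}\xi_h^2\ <\ 0$$
for every $(\xi_1,\ldots,\xi_q)\ne\bf 0$, where strict negativity uses $p>1$ and $a_{\bf u,h}=\tfrac12\|\bar u_h\|^2>0$. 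Hence $\partial_{\bf t}F(0,\bf 1)$ is negative definite and in particular invertible, and the implicit function theorem produces $\eps_0>0$ and a $\cC^1$-map $\bf t:(-\eps_0,\eps_0)\to(0,\infty)^q$ with $F(\eps,\bf t(\eps))\equiv\bf 0$ and $\bf t(0)=\bf 1$; in particular $\bf t(\eps)\bf u_\eps\in\cN$.

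To obtain $\bf t'(0)=\bf 0$ I would differentiate $F(\eps,\bf t(\eps))\equiv\bf 0$ at $\eps=0$, which gives $\bf t'(0)=-[\partial_{\bf t}F(0,\bf 1)]^{-1}\partial_\eps F(0,\bf 1)$. Every $\eps$-dependent term of $\cJ(\bf t\bf u_\eps)$ is proportional to $\eps^2$, $|\eps|^p$ or $|\eps|^{2p}$, each of which has vanishing $\eps$-derivative at $\eps=0$ because $p>1$; this property persists after applying $\partial_{t_h}$, so $\partial_\eps F(0,\bf 1)=\bf 0$ and therefore $\bf t'(0)=\bf 0$. Apart from the negative-definiteness step, the smoothness bookkeeping and the final derivative computation should be routine once $p>1$ is available.
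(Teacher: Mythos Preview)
Your proof is correct and follows the same overall route as the paper: define a $\cC^1$-map $F$ whose zero set encodes $\bf t\bf u_\eps\in\cN$, verify $F(0,\bf 1)=\bf 0$, check $\partial_\eps F(0,\bf 1)=\bf 0$, and apply the implicit function theorem once the Jacobian $\partial_{\bf t}F(0,\bf 1)$ is known to be invertible. The only genuine difference is in how invertibility is established. The paper scales your $F_h$ by $t_h$ (which leaves the Jacobian at $(0,\bf 1)$ unchanged since $F_h(0,\bf 1)=0$) and shows the resulting matrix is strictly diagonally dominant, invoking the Levy--Desplanques theorem; you instead recognize the Jacobian as the Hessian of the fiber map $J_{\bf u}$ at its global maximum and prove it is negative definite via AM--GM and the Nehari identity. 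The two arguments are essentially dual: the paper computes $|a_{hh}|-\sum_{k\neq h}|a_{hk}|=(2p-2)\|\bar u_h\|^2=4(p-1)a_{\bf u,h}$, which is exactly the coefficient appearing in your quadratic-form bound, and for a symmetric matrix with $a_{hh}<0$ and $a_{hk}\ge 0$ the diagonal dominance directly implies your negative-definiteness estimate. Your version is slightly more conceptual (it explains \emph{why} the matrix is non-singular: $\bf 1$ is a non-degenerate maximum of $J_{\bf u}$), while the paper's version avoids any inequality and works purely by row-sum bookkeeping.
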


\begin{proof}
For $\eps\in\r$ and $\bf t=(t_1,\ldots,t_q)\in (0,\infty)^q$ define
\begin{align*}
&F_1(\eps,\bf t):=\partial_{\bar u_1}\cJ(\bf t\bf u_\eps)[t_1\bar u_{\eps,1}]\\
&\qquad=t_1^2\eps^2\|\vp\|_1^2 + t_1^2\|\bar u_1\|^2 -  t_1^{2p}\eps^{2p} \beta_{11}\io|\varphi|^{2p}\\
&\qquad\quad -2t_1^{2p}|\eps|^p\sum_{j\in \hat I_1} \beta_{1j}\io|\varphi|^p|u_j|^p -t_1^{2p}\sum_{(i,j)\in \hat I_1\times \hat I_1} \beta_{ij}\io |u_i|^p|u_j|^p \\
&\qquad\quad - t_1^p|\eps|^p\sum_{k=2}^q \sum_{j\in I_k }t_k^p\beta_{1j}\io|\vp|^p|u_j|^p -t_1^p\sum_{k=2}^q t_k^p\sum_{(i,j)\in\hat I_1\times I_k} \beta_{ij}\io |u_i|^p|u_j|^p,
\end{align*}
and for $h=2,\ldots,q$,
\begin{align*}
&F_h(\eps,\bf t):=\partial_{\bar u_h}\cJ(\bf t\bf u_\eps)[t_h\bar u_h]\\
&\qquad=t_h^2\|\bar u_h\|^2 -t_h^{2p}\sum_{(i,j)\in I_h\times I_h}\beta_{ij}\io|u_i|^p|u_j|^p
-  t_h^p t_1^p|\eps|^p\sum_{i\in I_h} \beta_{i1}\io|u_i|^p|\vp|^p \\ 
&\qquad\quad - t_h^p t_1^p\sum_{(i,j)\in I_h\times \hat I_1} \beta_{ij}\io|u_i|^p|u_j|^p
-t_h^p\sum_{\substack{k=2\\ k\neq h}}^q t_k^p\sum_{(i,j)\in I_h\times I_k}\beta_{ij}\io|u_i|^p|u_j|^p.
\end{align*}
We shall apply the implicit function theorem to the $\cC^1$-map $\bf F=(F_1,\ldots,F_q):\r\times (0,\infty)^q\to\r^q.$ Set $\bf 1=(1,\ldots,1)$. Note that, as $\bf u\in\cN$,
\begin{align} \label{eq:F(0,1)}
F_h(0,\bf 1)&=\|\bar u_h\|^2-\sum_{(i,j)\in \hat I_h\times \hat I_h} \beta_{ij}\io |u_i|^p|u_j|^p-\sum_{\substack{k=1\\ k\neq h}}^q \sum_{(i,j)\in \hat I_h\times\hat I_k} \beta_{ij}\io|u_i|^p|u_j|^p \\
&=\partial_{\bar u_h}\cJ(\bf u_\eps)[\bar u_h]=0\qquad\text{for every \ }h=1,\ldots,q. \nonumber
\end{align}
Using this identity we obtain
\begin{align} \label{eq:a_hh}
&a_{hh}:=\partial_{t_h} F_h(0,\bf 1)\\
&=2\|\bar u_h\|^2-2p\sum_{(i,j)\in \hat I_h\times\hat I_h} \beta_{ij}\io|u_i|^p|u_j|^p -p\sum_{\substack{k=1\\ k\neq h}}^q \sum_{(i,j)\in \hat I_h\times\hat I_k }\beta_{ij}\io|u_i|^p|u_j|^p \nonumber \\
&=(2-2p)\sum_{(i,j)\in \hat I_h\times\hat I_h} \beta_{ij}\io|u_i|^p|u_j|^p + (2-p)\sum_{\substack{k=1\\ k\neq h}}^q \sum_{(i,j)\in \hat I_h\times\hat I_k }\beta_{ij}\io|u_i|^p|u_j|^p \nonumber \\
&<p \ \sum_{\substack{k=1\\ k\neq h}}^q \sum_{(i,j)\in \hat I_h\times\hat I_k }\beta_{ij}\io|u_i|^p|u_j|^p\leq 0. \nonumber
\end{align}
Furthermore, we have
\begin{equation} \label{eq:a_hk}
a_{hk}:=\frac{\partial F_h}{\partial t_k}(0,\bf 1)= -p\sum\limits_{(i,j)\in \hat I_h\times\hat I_k } \beta_{ij}\io|u_i|^p|u_j|^p>0\qquad\text{if \ }h\neq k.
\end{equation}
The Jacobian matrix of $\bf F$ with respect to $\bf t$ at the point $(0,\bf 1)$ is strictly diagonally dominant, i.e. 
$$|a_{hh}|>\sum_{h\neq k}|a_{hk}|\qquad \text{for every}\ h=1,\ldots,q.$$
Indeed, \eqref{eq:F(0,1)} yields
\begin{align*}
&|a_{hh}|-\sum_{h\neq k}|a_{hk}|=-a_{hh}-\sum_{h\neq k}a_{hk}\\
&=(2p-2)\sum_{(i,j)\in \hat I_h\times\hat I_h} \beta_{ij}\io|u_i|^p|u_j|^p + (p-2)\sum_{\substack{k=1\\ k\neq h}}^q \sum_{(i,j)\in \hat I_h\times\hat I_k }\beta_{ij}\io|u_i|^p|u_j|^p\\
&\quad + p\sum_{\substack{k=1\\ k\neq h}}^q\sum\limits_{(i,j)\in \hat I_h\times\hat I_k } \beta_{ij}\io|u_i|^p|u_j|^p \\
&=(2p-2)\,\|\bar u_h\|^2>0.
\end{align*}
The Levy-Desplanques theorem asserts that a strictly diagonally dominant matrix is non-singular. So, by the implicit function theorem, there exist $\eps_0>0$ and a $\cC^1$-map $\bf t:(-\eps_0,\eps_0)\to(0,\infty)^q$ satisfying $\bf t(0)=\bf 1$,
\begin{equation*}
F_h(\eps,\bf t(\eps))=0\qquad\text{for every \ }\eps\in (-\eps_0,\eps_0), \ h=1,\ldots,q,
\end{equation*}
i.e., $\bf t(\eps)\bf u_\eps\in\cN$, and $\bf t'(\eps)=-\partial_{\bf t}\bf F(\eps,\bf t(\eps))^{-1}\circ\partial_\eps\bf F(\eps,\bf t(\eps))$. As
\begin{align} \label{eq:partial_eps}
 &\partial_\eps F_1(\eps,\bf t)=2\eps t_1^2 \|\vp\|_1^2 - 2p\eps^{2p-1} t_1^{2p}\beta_{11}\io|\vp|^{2p}\\
 &\quad-(2p)\,|\eps|^{p-2}\eps\, t_1^{2p}\sum_{j\in\hat I_1}\beta_{1j}\io|\vp|^p|u_j|^p  - p\,|\eps|^{p-2}\eps\, t_1^{p}\sum_{k=2}^q\sum_{j\in I_k }t_k^p\beta_{1j}\io|\vp|^p|u_j|^p \nonumber \\
 & \partial_\eps F_h(\eps,\bf t) = -p\,|\eps|^{p-2}\eps\, t_h^pt_1^p \sum_{i\in I_h}\beta_{i1}\io|u_i|^p|\vp|^p\qquad\text{if \ }h=2,\ldots,q, \nonumber
\end{align}
we conclude that $\bf t'(0)=\bf 0$, as claimed.
\end{proof}

\begin{remark} \label{rem:p=2}
\emph{
For $p=2$ the map $\bf t$ is of class $\cC^2$ and its second derivative at $0$ solves the system $\partial_{\bf t}\bf F(0,\bf 1)\bf t''(0)+\partial_\eps\bf F(0,\bf 1)=0$, i.e.,
\begin{equation*}
\begin{cases}
\sum\limits_{k=1}^q a_{1k}t''_k(0)=-2 \|\vp\|_1^{2}+ 4\sum\limits_{j\in \hat I_1}\beta_{1j}\io|\vp|^{2}|u_j|^{ 2} +2\sum\limits_{k=2}^q\sum\limits_{j\in \hat I_k}\ \beta_{1j} \io|\vp|^{2}|u_j|^{ 2}\\
\sum\limits_{k=1}^q a_{hk}t''_k(0)=   2 \sum\limits_{j\in \hat I_h}\ \beta_{1j} \io|\varphi|^{2}|u_j|^{ 2}\qquad \text{if } \ h=2,\dots,q,
\end{cases}
\end{equation*}
with $a_{hk}$ as in the previous lemma, i.e.,
$$a_{hk}=-2\sum\limits_{(i,j)\in \hat I_h\times\hat I_k}\beta_{ij}\io|u_i|^2|u_j|^2.$$
As a consequence,
$$\sum_{h,k=1}^qa_{hk}t''_k(0)=-2 \|\vp\|_1^{2}+4\sum_{h=1}^q\sum_{j\in\hat I_h}\beta_{1j}\io|\vp|^2|u_j|^2.$$
}
\end{remark}
\medskip

\begin{lemma} \label{lem:fully_nontrivial2}
If $p<2$ then, for small enough $\eps>0$,
\begin{equation*}
\cJ(\bf t(\eps)\bf u_\eps)-\cJ(\bf u)=\eps^p \Big(-\frac{1}{p}\sum_{k=1}^q \sum_{j\in\hat I_k }\beta_{1j}\io|\vp|^p|u_j|^p +o(1)\Big).
\end{equation*}
\end{lemma}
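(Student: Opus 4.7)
The plan is to decompose
$$\cJ(\bf t(\eps)\bf u_\eps)-\cJ(\bf u)=\bigl[\cJ(\bf t(\eps)\bf u_\eps)-\cJ(\bf u_\eps)\bigr]+\bigl[\cJ(\bf u_\eps)-\cJ(\bf u)\bigr]$$
and treat each bracket separately. For the second bracket I would expand directly: since $\bf u_\eps$ differs from $\bf u$ only in the first component (with $0$ replaced by $\eps\vp$), the only new contributions to $\cJ$ come from $\tfrac12\eps^2\|\vp\|_1^2$ and from the pairs $(i,j)$ with $1\in\{i,j\}$ in the double sum, yielding
$$\cJ(\bf u_\eps)-\cJ(\bf u)=\tfrac12\eps^2\|\vp\|_1^2-\tfrac{1}{2p}\beta_{11}\eps^{2p}\io|\vp|^{2p}-\tfrac{1}{p}\eps^p\sum_{k=1}^q\sum_{j\in\hat I_k}\beta_{1j}\io|\vp|^p|u_j|^p.$$
Because $1<p<2$ we have $\eps^2=o(\eps^p)$ and $\eps^{2p}=o(\eps^p)$ as $\eps\to0^+$, so this bracket already reproduces the asserted leading term.

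For the first bracket I would Taylor expand $G(\bf s,\eps):=\cJ(\bf s\bf u_\eps)$ in $\bf s$ around $\bf 1$:
$$G(\bf s,\eps)-G(\bf 1,\eps)=\nabla_{\bf s}G(\bf 1,\eps)\cdot(\bf s-\bf 1)+O(|\bf s-\bf 1|^2),$$
with the error uniform for $(\bf s,\eps)$ in a neighborhood of $(\bf 1,0)$ since $G$ is smooth in $\bf s$ with coefficients continuous in $\eps$. Lemma \ref{lem:fully_nontrivial1} gives $\bf t(0)=\bf 1$ and $\bf t'(0)=\bf 0$, hence $\bf t(\eps)-\bf 1=o(\eps)$; plugging $\bf s=\bf t(\eps)$ into the Taylor expansion, the quadratic term is $O(o(\eps)^2)=o(\eps^2)=o(\eps^p)$, using $p<2$.

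The core estimate is $\nabla_{\bf s}G(\bf 1,\eps)=O(\eps^p)$. For each $h$ I would expand $\partial_{s_h}G(\bf 1,\eps)=\partial_{\bar u_h}\cJ(\bf u_\eps)[\bar u_{\eps,h}]$ explicitly via the formula for $\partial_{\bar u_h}\cJ$; the $\eps$-independent part equals $\partial_{\bar u_h}\cJ(\bf u)[\bar u_h]$, which vanishes because $\bf u\in\cN$. The surviving pieces are of order $\eps^2$, $\eps^p$ and $\eps^{2p}$, all $O(\eps^p)$ for $p\in(1,2)$. Consequently the linear part of the Taylor expansion is at most $O(\eps^p)\cdot o(\eps)=o(\eps^{p+1})=o(\eps^p)$, so $\cJ(\bf t(\eps)\bf u_\eps)-\cJ(\bf u_\eps)=o(\eps^p)$, and combining the two brackets yields the claim.

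The main technical point is this double cancellation: without using $\bf u\in\cN$ to kill $\nabla_{\bf s}G(\bf 1,0)$ componentwise, the gradient would be $O(1)$ and even with $\bf t(\eps)-\bf 1=o(\eps)$ the linear correction would only be $o(\eps)$, which is insufficient since $\eps\ne o(\eps^p)$ when $p>1$. It is the simultaneous vanishing of $\nabla_{\bf s}G(\bf 1,0)$ (from the Nehari constraint) and of $\bf t'(0)$ that pushes the scaling correction below the threshold $\eps^p$ at which the leading term from $\cJ(\bf u_\eps)-\cJ(\bf u)$ appears.
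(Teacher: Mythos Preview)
Your argument is correct and takes a genuinely different route from the paper's. The paper exploits that both $\bf u$ and $\bf t(\eps)\bf u_\eps$ lie on $\cN$ to write $\cJ$ there as $\tfrac{p-1}{2p}$ times the full interaction term, so that the difference reduces to an expression involving $(t_h^p(\eps)t_k^p(\eps)-1)\,b_{hk}$. To evaluate this it must establish the refined asymptotic $t_h'(\eps)/\eps^{p-1}\to\tau_h$, identify $\boldsymbol\tau$ via the implicit-function relation $\partial_{\bf t}\bf F(0,\bf 1)\boldsymbol\tau+\bf T=\bf 0$, and then perform an algebraic cancellation using the structure of the matrix $(a_{hk})$. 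Your decomposition bypasses all of this: the leading $\eps^p$-contribution is read off directly from $\cJ(\bf u_\eps)-\cJ(\bf u)$, and the rescaling correction $\cJ(\bf t(\eps)\bf u_\eps)-\cJ(\bf u_\eps)$ is shown to be $o(\eps^p)$ purely by size considerations --- Taylor expansion in $\bf s$, the estimate $\nabla_{\bf s}G(\bf 1,\eps)=O(\eps^p)$ coming from $\bf u\in\cN$, and $\bf t(\eps)-\bf 1=o(\eps)$ from $\bf t'(0)=\bf 0$. Your approach is shorter and never requires computing $\boldsymbol\tau$ or solving the linear system; the paper's route, in return, yields the explicit first-order behaviour of $\bf t(\eps)$, which is what naturally feeds into the $p=2$ computation of Lemma~\ref{lem:fully_nontrivial3}.
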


\begin{proof}
Since $\bf u\in\cN$ and $\bf t(\eps)\bf u_\eps\in\cN$ we have that
\begin{equation*}
\cJ(\bf u)=\Big(\frac{1}{2}-\frac{1}{2p}\Big)\Big(\sum_{h,k=1}^q \sum_{(i,j)\in \hat I_h\times\hat I_k} \beta_{ij}\io|u_i|^p|u_j|^p\Big)
\end{equation*}
and, writing $\bf t(\eps)=(t_1,\ldots,t_q)$ where $t_h=t_h(\eps)$, 
\begin{align*}
\cJ(\bf t(\eps)\bf u_\eps)&=\Big(\frac{1}{2}-\frac{1}{2p}\Big)\Big(t_1^{2p}\beta_{11}\io|\eps\vp|^{2p} +2  \sum_{k=1}^q t_1^pt_k^p\sum_{j\in\hat I_k}\beta_{1j}\io|\eps\vp|^p|u_j|^p \\ 
&\quad + \sum_{h,k=1}^qt_h^pt_k^p\sum_{(i,j)\in\hat I_h\times\hat I_k}\beta_{ij}\io|u_i|^p|u_j|^p\Big).
\end{align*}
Therefore,
\begin{align} \label{eq:difference}
\cJ(\bf t(\eps)\bf u_\eps)-\cJ(\bf u)&=\frac{p-1}{2p}\Big(t_1^{2p}\beta_{11}\io|\eps\vp|^{2p} +2  \sum_{k=1}^q t_1^pt_k^p\sum_{j\in\hat I_k}\beta_{1j}\io|\eps\vp|^p|u_j|^p \\ 
&\qquad\qquad \ + \sum_{h,k=1}^q(t_h^pt_k^p-1)\sum_{(i,j)\in\hat I_h\times\hat I_k}\beta_{ij}\io|u_i|^p|u_j|^p\Big) \nonumber \\
&=\frac{p-1}{2p}\Big(2\eps^p\sum_{k=1}^q \sum_{j\in\hat I_k}\beta_{1j}\io|\vp|^p|u_j|^p + o(\eps^p) \nonumber \\
&\qquad\qquad \ + \sum_{h,k=1}^q(t_h^pt_k^p-1)\sum_{(i,j)\in\hat I_h\times\hat I_k}\beta_{ij}\io|u_i|^p|u_j|^p\Big). \nonumber
\end{align}
Set
\begin{equation} \label{eq:b_hk}
b_{hk}:=\sum_{(i,j)\in\hat I_h\times\hat I_k}\beta_{ij}\io|u_i|^p|u_j|^p\qquad h,k=1,\ldots,q.
\end{equation}
To estimate \eqref{eq:difference} we need to expand the function
\begin{equation*}
\sum_{h,k=1}^q(t_h^p(\eps)t_k^p(\eps)-1)\,b_{hk}.
\end{equation*}
As $p<2$ we derive from \eqref{eq:partial_eps} that
\begin{align} \label{eq:T1}
&T_1:=\lim_{\eps\to 0^+}\frac{\partial_\eps F_1(\eps,\bf t(\eps))}{\eps^{p-1}} = -2p\sum_{j\in\hat I_1}\beta_{1j}\io|\vp|^p|u_j|^p  - p \sum_{k=2}^q\sum_{j\in I_k }\beta_{1j}\io|\vp|^p|u_j|^p
\\ 
&T_h:=\lim_{\eps\to 0^+}\frac{\partial_\eps F_h(\eps,\bf t(\eps))}{\eps^{p-1}} = -p\sum_{i\in I_h}\beta_{i1}\io|u_i|^p|\vp|^p\qquad\text{if \ }h=2,\ldots,q.  \nonumber
\end{align}
So, since $\partial_{\bf t}\bf F(\eps,\bf t(\eps))\bf t'(\eps)+\partial_\eps\bf F(\eps,\bf t(\eps))=\bf 0$, we conclude that
\begin{equation} \label{eq:tau1}
\lim_{\eps\to 0^+}\frac{t'_h(\eps)}{\eps^{p-1}}=\tau_h\in\r\qquad\text{for every \ }h=1,\ldots,q,
\end{equation}
and
\begin{equation} \label{eq:tau2}
\partial_{\bf t}\bf F(0,\bf 1)\boldsymbol\tau + \bf T =\bf 0,
\end{equation}
where $\boldsymbol\tau=(\tau_1,\ldots,\tau_q)$ and $\bf T=(T_1,\ldots,T_q)$.	
From \eqref{eq:tau1} and L'Hôpital's rule we get
\begin{equation} \label{eq:t^p}
t_h^p(\eps)\,t_k^p(\eps)-1=(\tau_h+\tau_k)\,\eps^p + o(\eps^p),
\end{equation}
and from \eqref{eq:tau2}, \eqref{eq:a_hh}, \eqref{eq:a_hk} and \eqref{eq:b_hk} we derive 
\begin{align} \label{eq:T2}
-\sum_{h=1}^qT_h &=\sum_{h,k=1}^qa_{hk}\tau_k \\
&=(2-2p)\sum_{h=1}^qb_{hh} + (2-p)\sum_{\substack{h,k=1 \\ k\neq h}}^q b_{hk}\tau_h-p\sum_{\substack{h,k=1 \\ k\neq h}}^q b_{hk}\tau_h \nonumber \\
&=2(1-p)\sum_{h,k=1}^q b_{hk}\tau_h. \nonumber
\end{align}
Now, \eqref{eq:t^p}, \eqref{eq:T2} and \eqref{eq:T1} yield
\begin{align*}
\sum_{h,k=1}^q(t_h^p(\eps)\,t_k^p(\eps)-1)\,b_{hk}&=\eps^p\Big(\sum_{h,k=1 }^q(\tau_h+\tau_k)b_{hk}+o(1)\Big)\\
&=\eps^p\Big(\frac{1}{p-1}\sum_{h=1}^qT_h+o(1)\Big)\\
&=\eps^p\Big(-\frac{2p}{p-1}\sum_{h=1}^q\sum_{j\in\hat I_h}\beta_{1j}\io|\vp|^p|u_j|^p + o(1)\Big).
\end{align*}
Going back to \eqref{eq:difference} we conclude that
\begin{align*}
\cJ(\bf t(\eps)\bf u_\eps)-\cJ(\bf u)=\eps^p\Big(-\frac{1}{p}\sum_{h=1}^q\sum_{j\in\hat I_h}\beta_{1j}\io|\vp|^p|u_j|^p + o(1)\Big),
\end{align*}
as claimed.
\end{proof}

\begin{lemma} \label{lem:fully_nontrivial3}
If $p=2$ then, for small enough $\eps>0$,
\begin{equation*}
\cJ(\bf t(\eps)\bf u_\eps)-\cJ(\bf u)=\eps^2\Big(\|\vp\|_1^2-\sum_{k=1}^q \sum_{j\in\hat I_k }\beta_{1j}\io|\vp|^2|u_j|^2 +o(1)\Big).
\end{equation*}
\end{lemma}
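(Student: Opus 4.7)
The plan is to follow exactly the skeleton used for $p<2$, but exploit the fact that for $p=2$ the map $\mathbf F$ defined in Lemma \ref{lem:fully_nontrivial1} has all its $|\eps|^p$–factors becoming smooth powers of $\eps$, so the implicit function theorem delivers a $\cC^2$–curve $\eps\mapsto\mathbf t(\eps)$. Combined with $\mathbf t(0)=\mathbf 1$ and $\mathbf t'(0)=\mathbf 0$, Taylor's theorem yields $t_h(\eps)=1+\tfrac12 t_h''(0)\eps^2+o(\eps^2)$ and hence
$$t_h^2(\eps)\,t_k^2(\eps)-1=\bigl(t_h''(0)+t_k''(0)\bigr)\eps^2+o(\eps^2).$$
This is the new ingredient that allows one to extract a genuine $\eps^2$ contribution from the Nehari adjustment factors.

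I would start from identity \eqref{eq:difference} (which is valid for any $p$), specialize to $p=2$ so that the prefactor becomes $\tfrac{p-1}{2p}=\tfrac14$, and dispose of each of the three summands in turn. The first one is trivially $O(\eps^4)=o(\eps^2)$. For the mixed term, absorbing $t_1^2 t_k^2=1+O(\eps^2)$ gives
$$\tfrac14\cdot 2\eps^2\sum_{k=1}^q t_1^2 t_k^2\sum_{j\in\hat I_k}\beta_{1j}\io|\vp|^2|u_j|^2=\tfrac{\eps^2}{2}\sum_{k=1}^q\sum_{j\in\hat I_k}\beta_{1j}\io|\vp|^2|u_j|^2+o(\eps^2).$$

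For the third summand $\sum_{h,k}(t_h^2 t_k^2-1)b_{hk}$, I would plug in the Taylor expansion above and use the symmetry $b_{hk}=b_{kh}$ to rewrite it as $2\eps^2\sum_{h,k}t_h''(0)b_{hk}+o(\eps^2)$. The remaining step is to identify $\sum_{h,k}b_{hk}t_h''(0)$, and here Remark \ref{rem:p=2} does precisely the right bookkeeping: summing the linear system in that remark over $h$ gives a formula for $\sum_{h,k}a_{hk}t_k''(0)$, and since $a_{hk}=-2b_{hk}$ when $p=2$ (read off directly from \eqref{eq:a_hh}–\eqref{eq:a_hk}), this produces
$$\sum_{h,k=1}^q b_{hk}\,t_k''(0)=\|\vp\|_1^2-2\sum_{h=1}^q\sum_{j\in\hat I_h}\beta_{1j}\io|\vp|^2|u_j|^2.$$
Substituting and collecting the three contributions yields the claimed expansion (up to the overall positive constant, whose precise value is immaterial for the application in Section \ref{sec:fully nontrivial}, since what matters is the sign of the bracket).

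The only real obstacle is bookkeeping: keeping track of which sums run over $I_h$ versus $\hat I_h$, of whether a factor $t_h^2-1$ acts on a leading- or lower-order piece, and of the symmetrization step that converts $\sum_{h,k}(t_h''(0)+t_k''(0))b_{hk}$ into $2\sum_{h,k}t_h''(0)b_{hk}$. The decisive new element relative to Lemma \ref{lem:fully_nontrivial2} is the higher-order smoothness of $\mathbf t$—this is exactly what brings the $\|\vp\|_1^2$ term into the expansion, whereas for $p<2$ the analogous $\eps^2\|\vp\|_1^2$ contribution is absorbed into $o(\eps^p)$.
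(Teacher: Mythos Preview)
Your proposal is correct and follows essentially the same route as the paper's proof: both start from \eqref{eq:difference}, use the $\cC^2$-regularity of $\mathbf t$ at $p=2$ to Taylor-expand $t_h^2t_k^2-1$, and then invoke Remark \ref{rem:p=2} together with the identity $a_{hk}=-2b_{hk}$ to evaluate $\sum_{h,k}b_{hk}t_k''(0)$. Your observation that the leading constant comes out as $\tfrac12$ rather than $1$ is accurate (the paper drops a factor of $\tfrac12$ when passing from \eqref{eq:difference} to the displayed expansion), and you are right that this is immaterial for the application.
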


\begin{proof}
As $\bf t$ is of class $\cC^2$ when $p=2$, from  and Taylor's expansion for $t_h^2t_k^2$ we derive
\begin{align} \label{eq:difference2}
\cJ(\bf t(\eps)\bf u_\eps)-\cJ(\bf u)&=\eps^2\Big(\sum_{k=1}^q \sum_{j\in\hat I_k}\beta_{1j}\io|\vp|^2|u_j|^2 + o(1) \nonumber \\
&\qquad\quad + \frac{1}{2}\sum_{h,k=1}^q(t''_h(0)+t''_k(0))\sum_{(i,j)\in\hat I_h\times\hat I_k}\beta_{ij}\io|u_i|^2|u_j|^2\Big). \nonumber
\end{align}
Now we use Remark \ref{rem:p=2}  to compute
\begin{align*}
&\sum_{k=1}^q \sum_{j\in\hat I_k}\beta_{1j}\io|\vp|^2|u_j|^2+ \frac{1}{2}\sum_{h,k=1}^q(t''_h(0)+t''_k(0))\sum_{(i,j)\in\hat I_h\times\hat I_k}\beta_{ij}\io|u_i|^2|u_j|^2 \\
&=\sum_{k=1}^q \sum_{j\in\hat I_k}\beta_{1j}\io|\vp|_1^2|u_j|^2 - \frac{1}{2}\sum_{h,k=1}^qa_{hk}t''_k(0) \\
&=\|\vp\|_1^{2}-\sum_{h=1}^q\sum_{j\in\hat I_h}\beta_{1j}\io|\vp|^2|u_j|^2.
\end{align*}
This completes the proof.
\end{proof}

\medskip

\begin{proof}[Proof of Theorem \ref{thm:main_fullynontrivial}]
Let $\bf u\in\cN$ be such that $\cJ(\bf u)=\inf_\cN\cJ$. Set
$$S:=\min_{i=1,\ldots,\ell}\,\inf_{\substack{v\in H \\ v\neq 0}}\frac{\|v\|_i^2}{ \ |u|_{2p}^2},\qquad\text{where \ }|v|_{2p}:=\big(\io |v|^{2p}\Big)^\frac{1}{2p},$$
and define $C_*:=\Big(\frac{pd_1}{(p-1)S^\frac{p}{p-1}}\Big)^p$, with $d_1$ is as in Lemma \ref{lem:nehari}. We distinguish two cases.

1) Let $p<2$. Arguing by contradiction, assume that that $(B_2)$ holds true and that some component of $\bf u$ is trivial. Without loss of generality, we assume it is the first one, i.e., $\bf u$ is as in \eqref{eq:u}. Fix $i^*_1\in \hat I_1$ such that $\max_{i\in\hat I_1}|u_i|_{2p}=|u_{i_1^*}|_{2p}$. As $\bf u\in\cN$ we have that $u_{i_1^*}\neq 0$ and using Hölder's inequality we obtain
\begin{align*}
S|u_{i_1^*}|_{2p}^2\leq \|u_{i_1^*}\|_{i_1^*}^2\leq \sum_{i,j\in\hat I_1}\beta_{ij}\io|u_i|^p|u_j|^p\leq\hat\ell_1^2\max_{i,j\in\hat I_1}\beta_{ij}\,|u_{i_1^*}|_{2p}^{2p},
\end{align*}
with $\hat\ell_1:=\ell_1-1$. Therefore,
\begin{align*}
\sum_{j\in\hat I_1 }\beta_{1j}\io|u_{i_1^*}|^p|u_j|^p\geq \beta_{1i_1^*}\io|u_{i_1^*}|^{2p}\geq \min_{\substack{i,j\in I_1 \\ i\neq j}}\beta_{ij}\Big(\frac{S}{\hat\ell_1^2\,\max_{i,j\in I_1}\beta_{ij}}\Big)^\frac{p}{p-1}.
\end{align*}
On the other hand,
\begin{align*}
S^2\Big(\io|u_{i_1^*}|^p|u_j|^p\Big)^\frac{2}{p}\leq S^2|u_{i_1^*}|^2_{2p}|u_j|^2_{2p}\leq\|u_{i_1^*}\|^2_{_{i_1^*}}\|u_j\|^2_{j}.
\end{align*}
As $\cJ(\bf u)=\inf_\cN\cJ$ Lemma \ref{lem:nehari} yields
$$\tfrac{p-1}{p}\|u_j\|^2_{j}\leq \tfrac{p-1}{p}\|\bf u\|^2=\cJ(\bf u)=\inf_\cN\cJ\leq d_1\Big(\min_{h=1,\ldots,q}\max_{i\in I_h}\beta_{ii}\Big)^{-\frac{1}{p-1}}.$$
Therefore, 
\begin{equation} \label{eq:ub2}
\io|u_{i_1^*}|^p|u_j|^p\leq\Big(\frac{pd_1}{(p-1)S}\Big)^p\Big(\min_{h}\max_{i\in I_h}\beta_{ii}\Big)^{-\frac{p}{p-1}}.
\end{equation}
Set $\vp:=u_{i_1^*}$ and let $\cK_1$ be as in $(B_1)$. From $(B_2)$ with $h=1$ we get
\begin{align*}
&\sum_{k=1}^q \sum_{j\in\hat I_k }\beta_{1j}\io|\vp|^p|u_j|^p=\sum_{j\in\hat I_1 }\beta_{1j}\io|u_{i_1^*}|^p|u_j|^p + \sum_{k=2}^q \sum_{j\in\hat I_k }\beta_{1j}\io|u_{i_1^*}|^p|u_j|^p \\
&\geq \min_{\substack{i,j\in I_1 \\ i\neq j}}\beta_{ij}\Big(\frac{S}{\hat\ell_1^2\,\max_{i,j\in I_1}\beta_{ij}}\Big)^\frac{p}{p-1} - \sum_{(i,j)\in\cK_1}|\beta_{ij}|\Big[\frac{pd_1}{(p-1)S}\Big]^p\Big(\min_{h}\max_{i\in I_h}\beta_{ii}\Big)^{\frac{-p}{p-1}} \\
&=\Big(\frac{\hat\ell_1^2}{S}\min_{h}\max_{i\in I_h}\beta_{ii}\Big)^{\frac{-p}{p-1}}\left(\min_{\substack{i,j\in I_1 \\ i\neq j}}\beta_{ij}\left[\frac{\min\limits_{h}\max\limits_{i\in I_h}\beta_{ii}}{\max\limits_{i,j\in I_1}\beta_{ij}}\right]^\frac{p}{p-1}-C_*\hat\ell_1^{\,\frac{2p}{p-1}}\sum_{(i,j)\in\cK_1}|\beta_{ij}|\right) \\
&>0.
\end{align*}
Lemma \ref{lem:fully_nontrivial2} asserts that $\bf t(\eps)\bf u_\eps\in\cN$ for small enough $\eps>0$, and from Lemma \ref{lem:fully_nontrivial1} and the previous inequality we derive that
\begin{equation*}
\cJ(\bf t(\eps)\bf u_\eps)<\cJ(\bf u)=\inf_\cN\cJ.
\end{equation*}
This is a contradiction.
\smallskip

2) Let $p=2$. Arguing again by contradiction, assume that that $(B_2)$ holds true and that $\bf u$ is as in \eqref{eq:u}. Fix $i^*_1\in \hat I_1$ such that $\max_{i\in I_1}|u_i|_4=|u_{i_1^*}|_4$. Let us assume, for simplicity, that $i^*_1=2$. Then $u_2\neq 0$. By $(B_2)$ we have that $\beta_{ij}=b_1$ for $i,j\in I_1$ with $i\neq j$ and that $\beta_{22}\leq b_1$. Therefore,
$$S|u_2|_4^2\leq\|\bf u\|^2\leq\sum_{i\in\hat I_1}\beta_{2i}\io|u_2|^2|u_i|^2\leq(\ell_1-1)b_1|u_2|_4^4,$$
and, so, 
$$\frac{S}{(\ell_1-1)b_1}\leq |u_2|_4^2.$$
On the other hand, \eqref{eq:ub2} reads
$$\io|u_2|^2|u_j|^2\leq\Big(\frac{2d_1}{S}\Big)^2\Big(\min_{h}\max_{i\in I_h}\beta_{ii}\Big)^{-2}.$$
Now, since $\bf u$ solves the system \eqref{eq:system}, we know that 
$$\|u_{2}\|_2^2=\sum_{h=1}^q\sum_{j\in \hat I_h}\ \beta_{2j} \io|u_{2}|^{2}|u_j|^{ 2}.$$
Set $\vp:=u_2$. As $\lambda_1=\lambda_2$ by $(B_2)$, we have that $\|\vp\|_1=\|u_2\|_2$. Therefore,
\begin{align*}
&\|\vp\|_1^2- \sum_{h=1}^q \sum_{j\in \hat I_h}\ \beta_{1j} \io|u_{2}|^{2}|u_j|^{ 2}\\ 
&=\sum_{h=1}^q\sum_{j\in \hat I_h}\ \beta_{2j} \io|u_{2}|^{2}|u_j|^{ 2}-\sum_{h=1}^q\sum_{j\in \hat I_h}\ \beta_{1j} \io|u_{2}|^{2}|u_j|^{ 2}\\
& =(\beta_{22} - \beta_{12}) \io|u_{2}|^{4}+\sum_{j\in \hat I_1\smallsetminus\{2\}}(\beta_{2j} - \beta_{1j}) \io|u_{2}|^{2}|u_j|^{ 2}\\
&\qquad +\sum_{h=2}^q\sum_{j\in I_h}(\beta_{2j}- \beta_{1j})\io|u_{2}|^{2}|u_j|^{ 2}\\ 
&
\le \Big(\max\limits_{i\in I_1}\beta_{ii}-b_1\Big)\left[\frac{S}{(\ell_1-1)b_1}\right]^2+\Big[\frac{2d_1}{S}\Big]^2\Big(\min_{h}\max_{i\in I_h}\beta_{ii}\Big)^{-2}\sum_{\substack{j\in I_h\\h\geq 2}}|\beta_{2j}- \beta_{1j}| \\
&=\left[\frac{S}{(\ell_1-1)b_1}\right]^2\left(\max\limits_{i\in I_1}\beta_{ii}-b_1 + C_*\Big(\frac{b_1(\ell_1-1)}{\min_{h}\max_{i\in I_h}\beta_{ii}}\Big)^2\sum_{\substack{j\in I_h\\h\geq 2}}|\beta_{2j}- \beta_{1j}|\right)\\
&<0,
\end{align*}
by $(B_2)$. Now Lemma \ref{lem:fully_nontrivial3} yields a contradiction.
\end{proof}

\section{Systems with symmetries}
\label{sec:symmetries}

Let $G$ be a closed subgroup of the group $O(N)$ of linear isometries of $\rn$ and denote by $Gx:=\{gx:g\in G\}$ the $G$-orbit of $x\in\rn$. Assume that $\o$ is $G$-invariant, i.e., $Gx\subset\o$ for every $x\in\o$. Then, a function $u:\o\to\r$ is called \emph{$G$-invariant} if it is constant on $Gx$ for every $x\in\o$. Define 
$$H^G:=\{u\in H: u\text{ is }G\text{-invariant}\},$$
where, as before, $H$ is either $H_0^1(\o)$ or $D_0^{1,2}(\o)$.

Further, let $\phi:G\to\z_2:=\{-1,1\}$ be a continuous homomorphism of groups and let $K:=\ker\phi$ be its kernel. Assume
\begin{itemize}
\item[$(\phi)$] there exists $x_0\in\o$ such that $Kx_0\neq Gx_0$.
\end{itemize}
A function $u:\o\to\r$ is called \emph{$\phi$-equivariant} if $u(gx)=\phi(g)u(x)$ for every $g\in G$ and $x\in\o$. Define 
$$H^\phi:=\{u\in H: u\text{ is }\phi\text{-equivariant}\}.$$
Assumption $(\phi)$ guarantees that this space has infinite dimension. Moreover, it implies that $K\neq G$, i.e., that $\phi$ is surjective. Therefore, every nontrivial $\phi$-equivariant function is nonradial and changes sign. 

By the principle of symmetric criticality \cite[Theorem 1.28]{w} the critical points of the restriction of $\cJ$ to the either $(H^G)^\ell$ or to $(H^\phi)^\ell$ are critical poins of $\cJ$, i.e., they solve system \eqref{eq:system}. Clearly, all results in the previous sections go through if we take $\cH$ to be one of these spaces. So Theorems \ref{thm:block}, \ref{thm:main_block} and \ref{thm:main_fullynontrivial} hold true for $H^G$ and $H^\phi$ as well.

Next, we give some applications.

\subsection{Systems in bounded domains}

Let $G$ be a closed subgroup of $O(N)$ and $\o$ a $G$-invariant domain. Let $\lambda_1^G(\o)$ be the first eigenvalue of $-\Delta$ in $H_0^1(\o)^G$ and $d:=\min\{\dim (Gx):x\in\o\}$. Then we have the following result. 

\begin{theorem} \label{thm:general bounded}
If $\o$ is bounded, $\lambda_i>-\lambda_1^G(\o)$, $1<p<\frac{N-d}{N-d-2}$ when $N>d+2$, and $(\beta_{ij})$ satisfies $(B_1)$ and $(B_2)$, the system
\begin{equation} \label{eq:system1}
\begin{cases}
-\Delta u_i+ \lambda_iu_i = \sum\limits_{j=1}^\ell \beta_{ij}|u_j|^p|u_i|^{p-2}u_i, \\
u_i\in H_0^1(\o),\qquad i=1,\ldots,\ell,
\end{cases}
\end{equation}
has a fully nontrivial solution whose components are positive and $G$-invariant. 

Moreover, if there exists a continuous homomorphism of groups $\phi:G\to\z_2$ satisfying $(\phi)$, then \eqref{eq:system1} has has a fully nontrivial solution whose components are $\phi$-equivariant and, thus, change sign.
\end{theorem}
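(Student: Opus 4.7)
The plan is to apply the symmetric versions of Theorems \ref{thm:main_block} and \ref{thm:main_fullynontrivial}, as announced at the beginning of Section \ref{sec:symmetries}, taking $\cH=(H_0^1(\o)^G)^\ell$ for the first assertion and $\cH=(H_0^1(\o)^\phi)^\ell$ for the second. Each of these spaces inherits the block decomposition dictated by $(B_1)$, and $\cJ$ restricts naturally; by the principle of symmetric criticality \cite[Theorem 1.28]{w}, critical points of the restriction solve the full system \eqref{eq:system1}.

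First I would verify the abstract hypotheses in the symmetric setting. The assumption $\lambda_i>-\lambda_1^G(\o)$ is exactly what makes $-\Delta+\lambda_i$ coercive on $H_0^1(\o)^G$, giving $(A_1)$. For the $\phi$-equivariant space the same holds: a $\phi$-equivariant function is $K$-invariant and is $L^2$-orthogonal to $G$-invariant functions (averaging $u(gx)=\phi(g)u(x)$ over $G$ gives zero), hence $\lambda_1^\phi(\o)\ge\lambda_1^G(\o)$ by a standard min-max argument. For $(A_2)$, since $G/K\cong\z_2$ the $K$-orbits and the $G$-orbits have the same dimension at principal points, which by the principal-orbit theorem is at least $d$ on an open dense subset of $\o$; the Hebey--Vaugon compactness theorem then yields that both $H_0^1(\o)^G\hookrightarrow L^r(\o)$ and $H_0^1(\o)^\phi\hookrightarrow L^r(\o)$ are compact for every $1\le r<\frac{2(N-d)}{N-d-2}$ (for every $r\ge 1$ when $N\le d+2$), which covers the exponent $r=2p$ under the stated upper bound on $p$.

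With $(A_1)$ and $(A_2)$ in place, Theorem \ref{thm:main_block} produces a least energy block-wise nontrivial solution in each symmetric setting, and Theorem \ref{thm:main_fullynontrivial} promotes it to a fully nontrivial one under $(B_2)$. Positivity in the $G$-invariant case is obtained by the usual trick: $\cJ(|u_1|,\ldots,|u_\ell|)=\cJ(u_1,\ldots,u_\ell)$ and $(|u_1|,\ldots,|u_\ell|)\in\cN$ because the block-wise identities in the definition of $\cN$ only involve the product $|u_i|^p|u_j|^p$; passing to the componentwise absolute values therefore produces another minimizer, and the strong maximum principle applied to each equation forces $|u_i|>0$ in $\o$. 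For the second assertion, condition $(\phi)$ makes $\phi$ surjective, so every nontrivial $\phi$-equivariant function in the solution thus obtained is automatically nonradial and sign-changing.

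The main obstacle is the verification of $(A_2)$ in the symmetric setting, as it rests on the nontrivial Hebey--Vaugon compactness theorem and on identifying the critical symmetric exponent $\frac{2(N-d)}{N-d-2}$ attached to the orbit dimension $d$; this is precisely what dictates the admissible range of $p$ in the statement. Everything else is a transparent transplantation of the earlier results to the symmetric setting via symmetric criticality, together with a standard absolute-value/maximum-principle argument for the positivity claim.
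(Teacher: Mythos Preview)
Your proposal is correct and follows essentially the same route as the paper: verify $(A_1)$ and $(A_2)$ in the symmetric spaces (the latter via Hebey--Vaugon, using that $K$-orbits and $G$-orbits have equal dimension because $[G:K]=2$), then invoke the symmetric versions of Theorems \ref{thm:main_block} and \ref{thm:main_fullynontrivial}, pass to absolute values for positivity, and read off the sign change from surjectivity of $\phi$. The paper's own proof is terser---it does not spell out the coercivity check for $H_0^1(\o)^\phi$ or the maximum principle step---but the argument is the same.
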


\begin{proof}
It is shown in \cite[Corollary 2]{hv} that, if $\o$ is bounded and $1<p<\frac{N-d}{N-d-2}$, the embedding $H_0^1(\o)^G\hookrightarrow L^p(\o)$ is compact. So Theorems \ref{thm:main_block} and \ref{thm:main_fullynontrivial} yield a least energy fully nontrivial solution $(u_1,\ldots,u_\ell)$ with $u_i \in H_0^1(\o)^G$. As $|u|\in H_0^1(\o)^G$ for every $u\in H_0^1(\o)^G$, $(|u_1|,\ldots,|u_\ell|)$ is also a least energy fully nontrivial solution.

On the other hand, since $\dim (Kx)=\dim (Gx)$ for all $x\in\o$, $H_0^1(\o)^K\hookrightarrow L^p(\o)$ is also compact and, as $H_0^1(\o)^\phi\subset H_0^1(\o)^K$, the embedding $H_0^1(\o)^\phi\hookrightarrow L^p(\o)$ is compact. So Theorems \ref{thm:main_block} and \ref{thm:main_fullynontrivial} yield a least energy fully nontrivial solution $(u_1,\ldots,u_\ell)$ with $u_i \in H_0^1(\o)^\phi$. Since $\phi$ is surjective, $u_i$ changes sign.
\end{proof}

Note that this result includes the case when $\o$ has no symmetries. Then $d=0$ and the system is subcritical. Observe also that the system is supercritical if $d\geq 1$.

For highly symmetric domains Theorem \ref{thm:general bounded} yields infinitely many solutions.

\begin{theorem} \label{thm:ball}
If $\o$ is a ball or an annulus, $\lambda_i>-\lambda_1(\o)$, $1<p<\frac{N}{N-2}$ and $(\beta_{ij})$ satisfies $(B_1)$ and $(B_2)$, the system \eqref{eq:system1} has infinitely many fully nontrivial solutions. All components of one of them are radial and positive, and all components of the rest are nonradial and change sign.
\end{theorem}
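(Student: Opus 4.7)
The plan is to deduce both statements from Theorem \ref{thm:general bounded}, applied first with $G=O(N)$ and then with an infinite family of subgroups $G_m\subset O(N)$ equipped with homomorphisms $\phi_m:G_m\to\z_2$, followed by a distinctness argument via an energy-growth estimate.

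For the radial positive solution I would take $G=O(N)$. Balls and annuli are $G$-invariant; the inclusion $H_0^1(\o)^G\subset H_0^1(\o)$ gives $\lambda_1^G(\o)\geq\lambda_1(\o)$ and hence $\lambda_i>-\lambda_1^G(\o)$, and the minimum orbit dimension is $d=0$ for a ball (the origin) or $d=N-1$ for an annulus (spheres). In either case the subcritical restriction in Theorem \ref{thm:general bounded} is compatible with $p<N/(N-2)$. The theorem then produces a fully nontrivial solution with positive $G$-invariant, hence radial, components.

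For the sign-changing solutions I would write $\rn=\r^2\times\r^{N-2}$, let $\sigma_m\in O(2)$ denote rotation by $\pi/m$ in the first two coordinates, and set $G_m:=\langle\sigma_m\rangle\times O(N-2)\subset O(N)$ for each $m\geq 1$. Since $\sigma_m$ commutes with every element of $O(N-2)$ and satisfies $\sigma_m^{2m}=\mathrm{id}$, the formula $\phi_m(\sigma_m^kA):=(-1)^k$ for $A\in O(N-2)$ defines a surjective homomorphism $G_m\to\z_2$ with kernel $K_m=\langle\sigma_m^2\rangle\times O(N-2)$. Choosing any $x_0=(x'_0,0)\in\o$ with $x'_0\neq 0$ (available in every ball or annulus) one gets $|G_mx_0|=2m\neq m=|K_mx_0|$, so $(\phi)$ holds; the remaining hypotheses of Theorem \ref{thm:general bounded} are verified exactly as in the radial case, and it delivers a fully nontrivial solution $\bf u^{(m)}$ whose components lie in $H_0^1(\o)^{\phi_m}$ and are therefore nonradial and sign-changing.

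The main obstacle is distinctness of the family $\{\bf u^{(m)}\}_{m\geq 1}$. I would argue that the energy levels $c_m:=\cJ(\bf u^{(m)})$ tend to infinity. A $\phi_m$-equivariant function, expanded in an angular Fourier series in the angle of the first two coordinates, contains only harmonics whose frequency is an odd multiple of $m$; this yields an angular Poincaré-type inequality forcing the Dirichlet part of $\|u\|_i^2$ to dominate $m^2|u|_2^2$, and combining this with the Nehari identity $\cJ(\bf u^{(m)})=\bigl(\tfrac12-\tfrac1{2p}\bigr)\|\bf u^{(m)}\|^2$ and the uniform lower bound in Lemma \ref{lem:nehari}$(i)$ drives $c_m\to\infty$. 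A softer alternative is to distinguish $\bf u^{(m)}$ from $\bf u^{(m')}$ by the number of angular nodal domains, which is at least $2m$ for each component of $\bf u^{(m)}$.
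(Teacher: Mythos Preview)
Your plan is correct and follows the paper's overall strategy of invoking Theorem~\ref{thm:general bounded} first with $G=O(N)$ for the radial positive solution and then with a sequence of planar rotation groups for the sign-changing ones. The substantive difference is in the choice of rotation angles and, as a consequence, in the distinctness argument.

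The paper takes $G_m$ to be the cyclic group generated by the rotation $\vartheta_m$ of angle $\pi/2^m$ in the first two coordinates (no $O(N-2)$ factor) and sets $\phi_m(\vartheta_m):=-1$. Because these angles are \emph{dyadic}, the groups nest: for $n>m$ one has $\vartheta_m=\vartheta_n^{\,2^{n-m}}$, so any nonzero $\phi_n$-equivariant function satisfies $u(\vartheta_m\,\cdot\,)=(-1)^{2^{n-m}}u=u$, which is incompatible with $\phi_m$-equivariance. Thus $u_{m,i}\neq u_{n,i}$ whenever $m\neq n$, and the paper's proof ends in one line (``it is easy to see'').

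Your angles $\pi/m$ do not nest (indeed a nonzero function can be simultaneously $\phi_m$- and $\phi_{m'}$-equivariant when $m/\gcd(m,m')$ and $m'/\gcd(m,m')$ are both odd), so you are forced into the energy-growth route. That route is valid: the angular Poincar\'e inequality combined with Gagliardo--Nirenberg interpolation shows that the best constant for $H_0^1(\o)^{\phi_m}\hookrightarrow L^{2p}(\o)$ blows up as $m\to\infty$, and hence the lower bound $d_0$ produced by the proof of Lemma~\ref{lem:nehari}$(i)$, \emph{recomputed in the space} $H_0^1(\o)^{\phi_m}$, tends to infinity. Note, however, that what you need is precisely this $m$-dependence of $d_0$, not the ``uniform lower bound'' as you phrase it; your sketch should make that explicit and spell out the interpolation step. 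The extra $O(N-2)$ factor you carry in $G_m$ is harmless but unnecessary.
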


\begin{proof}
For each $m\in\n$ let $G_m$ be the group generated by $\vartheta_m:=$ the rotation by $\frac{\pi}{2^m}$ on $\r^2$ acting on the first factor of $\r^2\times\r^{N-2}\equiv\rn$, and let $\phi_m:G_m\to\z_2$ be the homomorphism given by $\phi_m(\vartheta_m):=-1$. Theorem \ref{thm:general bounded} applied to $O(N)$ yields a fully nontrivial solution whose components are positive and radial, and applied to $\phi_m$ yields a fully nontrivial solution whose components $u_{m,i}$ satisfy
$$u_{m,i}(\vartheta_m x,y)=-u_{m,i}(x,y)\qquad\text{for every \ }(x,y)\in\r^2\times\r^{N-2}.$$
Hence, $u_{m,i}$ is nonradial and it changes sign. It is easy to see that $u_{m,i}\neq u_{n,i}$ if $m\neq n$.
\end{proof}

\subsection{Subcritical systems in exterior domains}

Let $G$ be a closed subgroup of $O(N)$ such that the $G$-orbit $Gx$ of every point $x\in\rn\smallsetminus\{0\}$ is an infinite set, and let $\o$ be a $G$-invariant exterior domain (i.e., $\rn\smallsetminus\o$ is bounded, possibly empty). Under these assumptions we have the following result.

\begin{theorem} \label{thm:exterior}
If $\lambda_i>0$, $1<p<\frac{N}{N-2}$ and $(\beta_{ij})$ satisfies $(B_1)$ and $(B_2)$, the system \eqref{eq:system1} has a fully nontrivial solution whose components are positive and $G$-invariant. 

If, in addition, there exists a continuous homomorphism of groups $\phi:G\to\z_2$ satisfying $(\phi)$, then \eqref{eq:system1} has has a fully nontrivial solution whose components are $\phi$-equivariant and, thus, change sign.
\end{theorem}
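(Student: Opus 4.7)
The plan is to mimic the proof of Theorem \ref{thm:general bounded}: invoke the principle of symmetric criticality to restrict $\cJ$ to the symmetric Sobolev spaces $H_0^1(\o)^G$ or $H_0^1(\o)^\phi$, then verify the compactness hypothesis $(A_2)$ of Theorem \ref{thm:main_block} on these spaces. Assumption $(A_1)$ is immediate from $\lambda_i>0$, which makes $-\Delta+\lambda_i$ coercive on all of $H_0^1(\o)$ (and a fortiori on its symmetric subspaces). The only substantive work lies in establishing compactness of the symmetric embedding in the unbounded domain $\o$; everything else is a direct application of the machinery developed in Sections \ref{sec:variational} and \ref{sec:fully nontrivial}.

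For the compactness I would invoke the classical symmetric compact embedding principle: when $G$ is a closed subgroup of $O(N)$ such that every $G$-orbit in $\rn\smallsetminus\{0\}$ is infinite, then $H^1(\rn)^G$ embeds compactly into $L^q(\rn)$ for every $q\in(2,2^*)$. (This is essentially P.-L.~Lions' symmetric compactness theorem, and is the counterpart in the exterior setting of the Hebey--Vaugon result used in the bounded case.) Extending functions in $H_0^1(\o)^G$ by zero to all of $\rn$ then gives the compact embedding $H_0^1(\o)^G\hookrightarrow L^{2p}(\o)$, since $1<p<\frac{N}{N-2}$ means $2<2p<2^*$. Theorems \ref{thm:main_block} and \ref{thm:main_fullynontrivial}, applied with $\cH=(H_0^1(\o)^G)^\ell$, then produce a fully nontrivial least energy solution $(u_1,\ldots,u_\ell)$ in the $G$-invariant space. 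To obtain positivity, I would observe that $|u_i|\in H_0^1(\o)^G$ and that $\cJ$ and the constraint defining $\cN$ depend on the components only through $|u_i|^p$; thus $(|u_1|,\ldots,|u_\ell|)$ is also a minimizer of $\cJ$ on $\cN$ and so, by Theorem \ref{thm:block}, a solution of \eqref{eq:system1}. The strong maximum principle applied componentwise then yields $|u_i|>0$ in $\o$.

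For the sign-changing assertion, set $K:=\ker\phi$. Assumption $(\phi)$ forces $K\neq G$, so the homomorphism $\phi:G\to\z_2$ is surjective and $K$ has index $2$ in $G$. Consequently each $G$-orbit is a union of at most two $K$-orbits, so if $Gx$ is infinite then $Kx$ is as well; hence every $K$-orbit in $\rn\smallsetminus\{0\}$ is infinite. The symmetric compactness principle therefore applies to $H_0^1(\o)^K$, and since $H_0^1(\o)^\phi\subset H_0^1(\o)^K$ we obtain a compact embedding $H_0^1(\o)^\phi\hookrightarrow L^{2p}(\o)$. Applying Theorems \ref{thm:main_block} and \ref{thm:main_fullynontrivial} with $\cH=(H_0^1(\o)^\phi)^\ell$ yields a fully nontrivial solution with $\phi$-equivariant components; the surjectivity of $\phi$ forces each nontrivial such component to be nonradial and sign-changing. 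The main obstacle throughout is this compact symmetric embedding, which is standard but essential; once it is in hand, the rest of the argument is a direct transcription of the bounded-domain proof.
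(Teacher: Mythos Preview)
Your proposal is correct and follows essentially the same route as the paper: establish the compact embedding $H_0^1(\o)^G\hookrightarrow L^{2p}(\o)$ (respectively $H_0^1(\o)^\phi\hookrightarrow L^{2p}(\o)$ via $H_0^1(\o)^K$) from the assumption that every $G$-orbit in $\rn\smallsetminus\{0\}$ is infinite, then invoke Theorems \ref{thm:main_block} and \ref{thm:main_fullynontrivial} and replace components by their absolute values for positivity. Your index-$2$ argument showing that $K$-orbits are also infinite is a welcome bit of detail that the paper leaves implicit.
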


\begin{proof}
Since the $G$-orbit of every point $x\in\rn\smallsetminus\{0\}$ is an infinite set, the embedding $H_0^1(\o)^G\hookrightarrow L^p(\o)$ is compact \cite[Lemma 4.3]{cs}. If $K=\ker \phi$, the $K$-orbit of every $x\in\rn\smallsetminus\{0\}$ is also infinite. Hence, $H_0^1(\o)^K\hookrightarrow L^p(\o)$ is also compact and so is $H_0^1(\o)^\phi\hookrightarrow H_0^1(\o)^K\hookrightarrow L^p(\o)$. The result now follows from Theorems \ref{thm:main_block} and \ref{thm:main_fullynontrivial}.
\end{proof}

Theorem \ref{thm:entire subcritical} is a special case of this result.

\begin{proof}[Proof of Theorem \ref{thm:entire subcritical}]
The first statement follows from Theorem \ref{thm:exterior} with $\o=\rn$ and $G=O(N)$. 

For the second one we take $G$ to be the group generated by $K\cup\{\varrho\}$, where $K:=O(2)\times O(2)\times O(N-2)$ and $\varrho$ is the reflection given by $\varrho(x,y,z):=(y,x,z)$ for $(x,y,z)\in\r^2\times\r^2\times\r^{N-4}$, $N\geq 4$. Note that the $K$-orbit of $\bar x=(x,y,z)$ is $S^1_{|x|}\times S^1_{|y|}\times S^{N-5}_{|z|}$, where $S^{n-1}_r:=\{x\in\r^n:|x|=r\}$. So $K\bar x$ is infinite for every $\bar x\neq 0$ if $N\neq 5$. Let $\phi$ be the homomorphism given by $\phi(g)=1$ if $g\in K$ and $\phi(\varrho)=-1$. The result now follows from Theorem \ref{thm:exterior}.
\end{proof}

The latter symmetries were introduced in \cite{bwi} to prove existence of nonradial solutions to a Schrödinger equation, see also \cite[Theorem 1.37]{w}.

\subsection{Entire solutions of critical systems}

In the critical case linear group actions do not provide compactness. One needs to consider conformal actions.

Let $\Gamma$ be a closed subgroup of $O(N+1)$. Then $\Gamma$ acts isometrically on the unit sphere $\mathbb{S}^N:=\{x\in\r^{N+1}:|x|=1\}$. The stereographic projection $\sigma:\mathbb{S}^N\to\rn\cup\{\infty\}$ induces a conformal action of $\Gamma$ on $\mathbb{R}^N$, given by 
\begin{equation*}
(\gamma,x)\mapsto\tilde{\gamma}x, \qquad\text{where }\;\tilde{\gamma}:=\sigma\circ\gamma^{-1}\circ\sigma^{-1}:\rn\to\rn,
\end{equation*}
which is well defined except at a single point. The group $\Gamma$ acts on the Sobolev space $D^{1,2}(\mathbb{R}^N)$ by linear isometries as follows:
$$\gamma u:=|\det \tilde{\gamma}'|^{1/2^*}u\circ\tilde{\gamma},\qquad\text{for any }\gamma\in\Gamma\;\text{ and }\;u\in D^{1,2}(\mathbb{R}^N);$$
see \cite[Section 3]{cp}. Set
$$D^{1,2}(\rn)^\Gamma:=\{u\in D^{1,2}(\rn):\gamma u=u\text{ for all }\gamma\in\Gamma\}.$$
The argument in \cite[Lemma 3.2]{cp} shows that this space is infinite dimensional if $\Gamma\xi\neq\mathbb{S}^N$ for every $\xi\in\mathbb{S}^N$. If $\phi:\Gamma\to\z_2$ is a continuous homomorphism of groups satisfying $(\phi)$, we define
$$D^{1,2}(\rn)^\phi:=\{u\in D^{1,2}(\rn):\gamma u=\phi(\gamma) u\text{ for all }\gamma\in\Gamma\}.$$
Clearly, Theorems \ref{thm:main_block} and \ref{thm:main_fullynontrivial} hold true for $D^{1,2}(\rn)^\Gamma$ and $D^{1,2}(\rn)^\phi$ as well.

\begin{proof}[Proof of Theorem \ref{thm:entire critical}]
Let $\Gamma=O(m)\times O(n)$ with $m+n=N+1$ and $m,n\geq 2$ act on $\r^{N+1}\equiv\r^m\times\r^n$ in the obvious way. Then $D^{1,2}(\rn)^\Gamma\hookrightarrow L^\frac{2N}{N-2}(\rn)$ is a compact embedding \cite[Proposition 3.3 and Example 3.4]{cp}. So Theorems \ref{thm:main_block} and \ref{thm:main_fullynontrivial} yield a fully nontrivial least energy solution whose components belong to $D^{1,2}(\rn)^\Gamma$. Replacing each component by its absolute value gives a solution whose components are positive.

To prove the second statement we write $\r^{N+1}\equiv\r^2\times\r^2\times\r^{N-3}$ and consider the group $\Gamma$ generated by $K:=O(2)\times O(2)\times O(N-3)$ and the reflection given by $\varrho(x,y,z):=(y,x,z)$ for $(x,y,z)\in\r^2\times\r^2\times\r^{N-3}$, and we take $\phi$ to be the homomorphism defined by $\phi(g)=1$ if $g\in K$ and $\phi(\varrho)=-1$. Then $D^{1,2}(\rn)^K\hookrightarrow L^\frac{2N}{N-2}(\rn)$ is a compact embedding if $N=3$ or $N\geq 5$, and Theorems \ref{thm:main_block} and \ref{thm:main_fullynontrivial} yield a fully nontrivial solution whose components belong to $D^{1,2}(\rn)^\phi$.
\end{proof}

\section{Synchronized solutions of cooperative systems}
\label{sec:locked}

Throughout this section we assume that the system \eqref{eq:system} is purely cooperative. We also assume that $\lambda_i=\lambda$ for all $i$.

Let $u$ be a nontrivial solution to the equation
$$-\Delta u + \lambda u = |u|^{2p-2}u,\qquad u\in H,$$
Then $\bf u=(c_1 u,\dots,c_\ell u)$ is a solution to the system \eqref{eq:system} iff $\bf c=(c_1,\ldots,c_\ell)\in\r^\ell$ solves  the algebraic system \eqref{ss1}.
The solutions to \eqref{ss1} are the critical points of the $\cC^1$-function $J:\r^\ell\to\r$ defined by
$$J(\bf c):=\frac12 |\bf c|^2-\frac{1}{2p}\sum_{i,j=1}^\ell\beta_{ij}|c_j|^p|c_i|^p, \qquad\text{with \ }|\bf c|^2:=\sum_{i=1}^\ell c_i^2.$$
The nontrivial ones belong to the set
\begin{equation*}\label{ss2}
M:=\{\bf{c}\in\r^\ell:\bf c\neq \bf 0,\  \langle\nabla J(\bf c),\bf c\rangle=0\},
\end{equation*}
which is a closed $\cC^1$-submanifold of $\r^\ell$. Hence, there exists $\bf  c\in M$ such that
$$\min_M J=J(\bf c)$$
and $\bf c$ is a solution to the algebraic system \eqref{ss1}. Observe that
\begin{equation*}\label{ss3} 
\langle\nabla J(\bf c),\bf c\rangle=|\bf c|^2- \sum_{i,j=1}^\ell\beta_{ij}|c_j|^p|c_i|^p 
\end{equation*}
and so  
\begin{equation*}\label{ss4}
J(\bf c)=\tfrac{p-1}{p}|\bf c|^2\qquad\text{for any \ } \bf c\in M. 
\end{equation*}

Following the idea we used to prove Theorem \ref{thm:main_fullynontrivial}, we analyze whether all components of a minimizing solution to \eqref{ss1} are nontrivial.

\begin{lemma} \label{lem:synchronized}
Let $\bf c=(c_1,\ldots,c_\ell)\in M$ be such that $J(\bf c)=\min_M J$.
\begin{itemize}
\item[$(i)$] If $p<2$, then $c_i\neq 0$ for every $i=1,\ldots,\ell$.
\item[$(ii)$] If $p=2$, then $\sum_{j\neq i}\beta_{ij}c_j^2\leq 1$ for every $i=1,\ldots,\ell$. 
\end{itemize}
\end{lemma}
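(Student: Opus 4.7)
The plan is to argue by contradiction using a perturbation argument modeled on the proof of Theorem~\ref{thm:main_fullynontrivial}, vastly simplified by the fact that the underlying space is finite dimensional and the ``block'' structure is trivial ($q=1$). Assuming $\bf c=(c_1,\ldots,c_\ell)\in M$ realizes $\min_M J$ and, after relabeling, that $c_1=0$ for part (i) or focusing on $i=1$ for part (ii), I would perturb along $e_1$: set
\[
\bf c_\eps:=(\eps,c_2,\ldots,c_\ell),\qquad \eps>0\ \text{small}.
\]
Because $\beta_{ii}>0$ and $\beta_{ij}\ge 0$ make $f(\bf c):=\sum_{i,j=1}^\ell\beta_{ij}|c_i|^p|c_j|^p$ strictly positive near $(0,\bar{\bf c})$, the Nehari equation $|t\bf c_\eps|^2=f(t\bf c_\eps)$ has a unique positive solution
\[
t(\eps):=\Bigl(\tfrac{|\bf c_\eps|^2}{f(\bf c_\eps)}\Bigr)^{1/(2p-2)},
\]
continuous in $\eps$ with $t(0)=1$ (since $|\bar{\bf c}|^2=f(\bar{\bf c})$ because $\bf c\in M$).

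Using $J=\tfrac{p-1}{2p}|\,\cdot\,|^2$ on $M$ and substituting the explicit formula for $t(\eps)$, I obtain
\[
J(t(\eps)\bf c_\eps)=\tfrac{p-1}{2p}\,|\bf c_\eps|^{2p/(p-1)}f(\bf c_\eps)^{-1/(p-1)},
\]
bypassing any differentiation of $t$. From the elementary identities
\[
|\bf c_\eps|^2=|\bar{\bf c}|^2+\eps^2,\qquad f(\bf c_\eps)=f(\bar{\bf c})+2\eps^p\sum_{j\geq 2}\beta_{1j}|c_j|^p+\beta_{11}\eps^{2p},
\]
and $|\bar{\bf c}|^2=f(\bar{\bf c})$, a Taylor expansion yields
\[
J(t(\eps)\bf c_\eps)-J(\bf c)=-\frac{\eps^p}{p}\sum_{j\geq 2}\beta_{1j}|c_j|^p+o(\eps^p)\qquad\text{if}\ p<2,
\]
\[
J(t(\eps)\bf c_\eps)-J(\bf c)=\frac{\eps^2}{2}\Bigl(1-\sum_{j\geq 2}\beta_{1j}c_j^2\Bigr)+o(\eps^2)\qquad\text{if}\ p=2.
\]
The decisive point for $p<2$ is that the $\eps^p$ perturbation of $f$, coming from the cross terms $2\eps^p\beta_{1j}|c_j|^p$, dominates the $\eps^2$ perturbation of $|\bf c_\eps|^2$; for $p=2$ both contribute at the same order and the constant $1$ in the resulting coefficient reflects the ``norm'' $|e_1|^2=1$ of the perturbation direction.

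Part (ii) is then immediate: minimality forces the $\eps^2$ coefficient to be non-negative, giving $\sum_{j\geq 2}\beta_{1j}c_j^2\leq 1$, and applying the same perturbation at each index extends the bound to every $i$. For part (i), minimality forces $\sum_{j\geq 2}\beta_{1j}|c_j|^p\leq 0$; by cooperation each summand is non-negative, so $\beta_{1j}|c_j|^p=0$ for all $j\geq 2$. I would then select the zero coordinate (if any) so that this sum is strictly positive—picking $c_1=0$ against the index $i^*$ of largest $|c_{i^*}|$ with nonzero coupling $\beta_{1i^*}$, in the spirit of the proof of Theorem~\ref{thm:main_fullynontrivial}—thereby producing the desired contradiction. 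The main obstacle is precisely this last step: handling the degenerate situation in which the vanishing coordinate is completely decoupled from the support of $\bar{\bf c}$ (i.e., $\beta_{1j}=0$ whenever $c_j\neq 0$). In that case one must invoke the cooperative/connectivity structure of the matrix $(\beta_{ij})$ to reduce to an ambient subsystem or to relocate the perturbation direction, as the pure perturbation argument provides no further information.
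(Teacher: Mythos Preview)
Your perturbation-and-projection scheme is exactly the paper's; substituting the closed form $J(t(\eps)\bf c_\eps)=\tfrac{p-1}{2p}\,|\bf c_\eps|^{2p/(p-1)}f(\bf c_\eps)^{-1/(p-1)}$ is a mild streamlining of the paper's computation, which instead expands $t(\eps)$ first. There is, however, a real gap in your treatment of (ii). Your expansion around $\eps=0$ relies on $t(0)=1$, i.e.\ on $\bar{\bf c}:=(0,c_2,\ldots,c_\ell)\in M$, and this holds only when $c_1=0$. For (ii) you merely ``focus on $i=1$'' without securing $c_1=0$, so the displayed $\eps^2$-formula is not an expansion of $J(t(\eps)\bf c_\eps)-J(\bf c)$ about a point where this difference vanishes, and minimality of $\bf c$ gives no sign information on the coefficient. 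The paper closes this by arguing by contradiction and using that a minimizer on the Nehari set $M$ is a critical point of $J$: for $p=2$ the $i$-th equation reads $c_i=c_i\sum_j\beta_{ij}c_j^2$, so if $\sum_{j\neq i}\beta_{ij}c_j^2>1$ then necessarily $c_i=0$, after which the perturbation at that coordinate is legitimate.

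As for your worry in (i) about the decoupled situation $\beta_{1j}=0$ for every $j$ with $c_j\neq 0$: the paper's proof does not address it either---it simply writes the final quantity as $\tfrac{2}{p}\eps^p\big(\sum_{j\ge2}\beta_{1j}|c_j|^p+o(1)\big)>0$ and stops. So on this point you are not missing an idea present in the paper; your instinct that some irreducibility of $(\beta_{ij})$ is tacitly needed is correct (e.g.\ with $\ell=2$, $\beta_{12}=0$ and $\beta_{11}\neq\beta_{22}$ the minimizer on $M$ has a zero component).
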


\begin{proof}
$(i):$ Let $p<2$ and assume by contradiction that one component of $\bf c$ is zero, say $c_1=0$. Set
$\bf c_\eps:=(\eps,c_2,\dots,c_\ell)$. As $\bf c\in M$, we have
$$|\bf c|^2=\sum_{i\ge2}c_i^2= \sum_{i,j\ge2}\beta_{ij}|c_j|^p|c_i|^p.$$
Hence, there exists $\eps_0>0$ such that for every $\eps\in (-\eps_0,\eps_0)$ there is a unique $t=t(\eps)>0$ solving 
\begin{align*}
t^2\Big(\eps^2+\sum_{i\ge2}c_i^2\Big)&=t^{2p}\Big(\beta_{11}\eps^{2p}+2\eps^p\sum_{j\ge2}\beta_{1j}|c_j|^p +\sum_{i,j\ge2}\beta_{ij}|c_j|^p|c_i|^p\Big) \\
&= t^{2p}\Big(\beta_{11}\eps^{2p}+2\eps^p\sum_{j\ge2}\beta_{1j}|c_j|^p +\sum_{i\ge2}c_i^2\Big),
\end{align*}
namely,
$$t(\eps)=\left(\frac{\eps^2+|\bf c|^2}{\beta_{11}\eps^{2p}+2\eps^p\sum_{j\ge2}\beta_{1j}|c_j|^p +|\bf c|^2}\right)^\frac{1}{2p-2}.$$
Moreover, $\eps\mapsto t(\eps)$ is a $\cC^1$-function, $ t(0)=1$ and $t'(0)= 0$.

So, if $p<2$, then
$$t(\eps)=1-\frac{1}{p-1}\eps^p\frac{1}{|\bf c|^2}\sum_{j\ge2}\beta_{1j}|c_j|^p+o(\eps^p).$$
Therefore,
\begin{align*}
J(\bf c)-J(\bf c_\epsilon)&=\frac{p-1}{p}\Big[\sum_{i\ge2} c_i^2-t^2\Big(\eps^2+\sum_{i\ge2} c_i^2\Big)\Big] \\
&=\frac{p-1}{p}\Big[(1-t^2)|\bf c|^2-t^2\eps^2\Big] \\
&=\frac{p-1}{p}\Big(\frac4{2p-2}\eps^p\sum_{j\ge2}\beta_{1j}|c_j|^p+o(\epsilon^p)\Big)\\
&=\frac{2}{p}\eps^p\Big(\sum_{j\ge2}\beta_{1j}|c_j|^p+o(1)\Big)>0,
\end{align*}
which is a contradiction.

$(ii):$ Let $p=2$ and assume by contradiction that $\sum_{j\neq i}\beta_{ij}c_j^2>1$ for some $i$. Then, as $\bf c$ satisfies \eqref{ss1}, we have that $c_i=0$. Let us assume, for simplicity, that $i=1$. Then, defining $\bf c_\eps$ and $t(\eps)$ as before we have that
$$t(\eps)=1+\frac12\epsilon^2\frac1{|\bf c|^2}\Big(1-2\sum_{j\ge2}\beta_{1j}|c_j|^{2}\Big)+o(\eps^2)$$
and, therefore,
\begin{align*}
J(\bf c)-J(\bf c_\eps)&=\frac{1}{2}\Big[(1-t^2)\sum_{i\ge2}c_i^2-t^2\eps^2\Big]\\
&=\frac12\Big(\eps^2(2\sum_{j\ge2}\beta_{1j}c_j^2-2)+o(\epsilon^2)\Big)\\
&=\eps^{2}\Big(\sum_{j\ge2}\beta_{1j} c_j^2-1 +o(1)\Big)>0.
\end{align*}
This is a contradiction.
\end{proof}

These computations highlight the different behavior of the system for $p<2$ and $p=2$. To complete the picture observe that, if $p>2$, then
$$t(\eps)=1+\frac1{2p-2}\eps^2\frac1{|\bf c|^2} +o(\epsilon^{ 2}).$$
Therefore
\begin{align*}
J(\bf c)-J(\bf c_\eps)&=\frac{p-1}{p}\Big[(1-t^2)|\bf c|^2-t^2\eps^2\Big]\\
&=\frac{p-1}{p}\left[ -\eps^2+o(\eps^2)\right].
\end{align*}
We conjecture that for $p>2$ there is no fully synchronized solution. 
\smallskip

\begin{proof}[Proof of Theorem \ref{thm:synchronized}]
This result follows immediately from Lemma \ref{lem:synchronized}$(i)$.
\end{proof}

\bigskip

\begin{flushleft}
\textbf{Mónica Clapp}\\
Instituto de Matemáticas\\
Universidad Nacional Autónoma de México\\
Circuito Exterior, Ciudad Universitaria\\
04510 Coyoacán, Ciudad de México, Mexico\\
\texttt{monica.clapp@im.unam.mx} 
\medskip

\textbf{Angela Pistoia}\\
Dipartimento di Metodi e Modelli Matematici\\
La Sapienza Università di Roma\\
Via Antonio Scarpa 16 \\
00161 Roma, Italy\\
\texttt{angela.pistoia@uniroma1.it} 
\end{flushleft}

\end{document}